\DeclareMathOperator{\LC}{LC}
\DeclareMathOperator{\Log}{Log}
\DeclareMathOperator{\N}{NP}
\DeclareMathOperator{\inte}{int}
\DeclareMathOperator{\relint}{relint}
\DeclareMathOperator{\Conv}{Conv}
\DeclareMathOperator{\val}{val}
\DeclareMathOperator{\Trop}{Trop}
\newcommand{\R}{\mathbb{R}}
\newtheorem{thm}{Theorem}[section] 
\newtheorem{cor}[thm]{Corollary} 
\newtheorem{prop}[thm]{Proposition} 
\newtheorem{lemma}[thm]{Lemma}
\theoremstyle{definition}
\newtheorem{ex}[thm]{Example}
\newtheorem{remark}[thm]{Remark}
\begin{document}

\title{Real tropicalization and negative faces of the Newton polytope}

\author{Máté L. Telek}
\address{Department of Mathematical Sciences, University of Copenhagen,
Universitetsparken 5,
2100 Copenhagen, Denmark}
\email{mlt@math.ku.dk}

\begin{abstract}
In this work, we explore the relation between the tropicalization of a real semi-algebraic set $S = \{ f_1 < 0, \dots , f_k < 0\}$ defined in the positive orthant and the combinatorial properties of the defining polynomials $f_1, \dots, f_k$. We describe a cone that depends only on the face structure of the Newton polytopes of $f_1, \dots ,f_k$ and the signs attained by these polynomials. This cone provides an inner approximation of the real tropicalization, and it coincides with the real tropicalization if $S = \{ f < 0\}$ and the polynomial $f$ has generic coefficients. Furthermore, we show that for a maximally sparse polynomial $f$ the real tropicalization of $S = \{ f < 0\}$ is determined by the outer normal cones of the Newton polytope of $f$ and the signs of its coefficients. Our arguments are valid also for signomials, that is, polynomials with real exponents defined in the positive orthant.

\medskip
\emph{Keywords: logarithmic limit set, signomial, semi-algebraic set, signed support} 
\end{abstract}

\maketitle

%\tableofcontents

\section{Introduction}

Real algebraic varieties, or more generally, semi-algebraic sets, play a central role in applications, e.g. in chemical reaction network theory \cite{CRN_Dickenstein} or in robotics \cite{Robotics}. In practice, the polynomials defining these sets involve many variables and many monomials, which makes using standard techniques from real (semi-)algebraic geometry infeasible.

Tropicalization is the process of associating a polyhedral complex to an algebraic variety. This can be used to answer questions about the variety, such as computing its dimension \cite[Structure Theorem]{maclagan2015introduction}, or to compute Gromov-Witten invariants, that is, to count the number of curves of given degree and genus passing through a given finite number of points in the complex projective plane \cite{Mikhalkin2003EnumerativeTA}. Moreover, using tropicalizations one can find the generic number of solutions of a parametric polynomial equation system \cite{YuePaul}.

One of the roots of tropical geometry goes back to 1971 when Bergman \cite{Bergman_LogLimit} studied the logarithmic limit of an algebraic variety $V$ in the complex torus $(\mathbb{C}^*)^{n}$. The logarithmic limit of $V$ is defined as the limit in the Hausdorff distance of the images of $V$ under the coordinate-wise logarithm map with base $t > 0$
\[ \Log_t \colon (\mathbb{C}^{*})^n \mapsto \mathbb{R}^{n}, \quad z \mapsto (\log_t( \lvert z_1 \rvert), \dots , \log_t( \lvert z_n \rvert)\]
as $t \to \infty$. If $V$ is a hypersurface, i.e. it is defined by one polynomial $f$, the logarithmic limit of $V$ equals the $(n-1)$-skeleton of the outer normal fan of the Newton polytope of $f$ \cite[Corollary 6.4]{MIKHALKIN20041035}.

The logarithmic limit of a real semi-algebraic set $S \subseteq \mathbb{R}_{>0}^{n}$ was first studied by Alessandrini \cite{Alessandrini2007LogarithmicLS} in 2013. The author showed that the logarithmic limit of $S$ is a closed polyhedral complex of dimension at most the dimension of $S$ \cite[Theorem 3.11]{Alessandrini2007LogarithmicLS}. In \cite{RealTrop2022, BLEKHERMAN2022108561}, the authors called the logarithmic limit of $S$ the \emph{real tropicalization} of $S$. In this manuscript, we follow this notation and write
\begin{align*}
\Trop(S) := \lim_{t \to \infty} \Log_{t}(S).
\end{align*}

Computing the real tropicalization of a semi-algebraic set $S \subseteq \mathbb{R}_{>0}^{n}$ is known to be hard. The Fundamental Theorem  \cite[Theorem 6.9]{RealTrop_SemiAlgSet} implies that $\Trop(S)$ is described as an intersection of outer normal cones as follows: For a polynomial $f \in \mathbb{R}[x_1, \dots ,x_n]$, we call an exponent vector negative if the coefficient of the corresponding monomial is negative. Similarly, a face $F$ of the Newton polytope $\N(f)$ is a \emph{negative face} if $F$ contains a negative exponent vector. The union of the outer normal cones of $\N(f)$ that correspond to negative faces is denoted by $\mathcal{N}_f^-$. Using this notation, we have: 
\begin{align}
\label{Eg_TropDefinition}
 \Trop(S) = \bigcap_{f \leq 0 \text{ on } S} \mathcal{N}_f^-,
\end{align}
where the intersection is taken over all polynomials $f \in \mathbb{R}[x_1, \dots ,x_n]$ which are nonpositive on $S$.

In \cite[Theorem 6.9]{RealTrop_SemiAlgSet}, it was also shown that the intersection in \eqref{Eg_TropDefinition} can be taken over finitely many such polynomials, but to the best of our knowledge, no algorithm is known to find such a finite set of polynomials. However, in certain cases, there exist good approximations of the real tropicalization. From \cite[Proposition 6.12]{RealTrop_SemiAlgSet}, it follows that the real tropicalization of a semi-algebraic set $S$ of the form $S = \bigcap_{i=1}^{k} f_i^{-1} (\mathbb{R}_{< 0})$ satisfies
\begin{align}
\label{Eq_InclPoly}
\inte\big( \bigcap_{i=1}^{k} \mathcal{N}_{f_i}^- \big) \subseteq \Trop(S) \subseteq \bigcap_{i=1}^{k} \mathcal{N}_{f_i}^-.
\end{align}
If the set on the right-hand side in \eqref{Eq_InclPoly} is \emph{regular} (it is equal to the closure of its interior), then the real tropicalization of $S$ equals the intersection of the cones $\mathcal{N}_{f_i}^-, i = 1, \dots ,k$ \cite[Corollary 4.8]{TropSpecta}. In Proposition \ref{Lemma_CommonNegVertex}, we characterize when regularity happens. As a consequence of this, we show that if $S$ is defined by one polynomial $f$, i.e. $S= f^{-1}(\mathbb{R}_{<0})$, then $\mathcal{N}_{f}^-$ is regular if and only if the faces of $\N(f)$ that contain a negative exponent vector have also a negative vertex. This seems to be a quite restrictive assumption, but it is automatically satisfied for polynomials whose set of exponent vectors equals the set of vertices of the Newton polytope. Such polynomials are called \emph{maximally sparse} in the literature \cite{Nisse2008maximally}. Thus, if $f$ is maximally sparse, then the real tropicalization of $S= f^{-1}(\mathbb{R}_{<0})$ is easy to compute as it equals $\mathcal{N}_f^-$ (Corollary \ref{Cor_MaxSparse}).

Our main result is Theorem \ref{Thm_Inclusions}, where we give an elementary proof of the inclusions in \eqref{Eq_InclPoly}, while we also refine them. We associate a cone $\Sigma(f_1, \dots , f_k)$, called the \emph{actual negative normal cone} of $f_1, \dots , f_k$ (see Section \ref{Sec_Prelim}) that provides a better inner approximation of $\Trop(S)$:
\begin{align}
\label{Eq_InclPoly2}
\inte\big( \bigcap_{i=1}^{k} \mathcal{N}_{f_i}^- \big) \subseteq \Sigma(f_1, \dots , f_k)  \subseteq \Trop(S) \subseteq \bigcap_{i=1}^{k} \mathcal{N}_{f_i}^-.
\end{align}
The proof of Theorem \ref{Thm_Inclusions} is valid also in the case where the defining polynomials of the semi-algebraic set have real exponents. 

Theorem \ref{Thm_Inclusions} has two main consequences. In the case when $S$ is defined by one polynomial, i.e. $S= f^{-1}(\mathbb{R}_{<0})$, we show that $\Sigma(f)$ equals the real tropicalization of $S$ when the coefficients of $f$ are generic (Corollary \ref{Cor_RealTropOnePoly}). 

Specifically, to compute $\Sigma(f)$, one needs to determine all faces of the Newton polytope of $f$ for which the restriction of $f$ to the face takes negative values over $\mathbb{R}_{>0}^{n}$. Deciding if a polynomial is nonnegative is an NP-hard problem \cite{Laurent2009}. To address this difficulty, several sufficient criteria implying nonnegativity have been introduced. For instance, one approach, going back to Hilbert \cite{Hilbert1888berDD}, is to express the polynomial as a sum of squares (SOS); this decomposition can be found by semidefinite programming, see  \cite{Laurent2009} and the references therein. Another approach is to write the polynomial as sums of nonnegative circuits (SONC) \cite{SONC,DRESSLER2019149}, see also \cite{Reznick1989FormsDF,Chandrasekaran2014RelativeER}.

The set of all nonnegative polynomials is a closed convex cone, called the nonnegativity cone. The sets of SOS and SONC polynomials are also convex cones, clearly contained in the nonnegativity cone. The genericity assumption in Corollary \ref{Cor_RealTropOnePoly} is that neither $f$ nor its restrictions to the faces of the Newton polytope lie on the boundary of the nonnegativity cone. To the best of our knowledge, there are no complete descriptions of the boundary neither of the nonnegativity cone nor of the SOS cone. On the contrary, the boundary of the SONC cone has been characterized in \cite{SoncBoundary}. This characterization together with Corollary \ref{Cor_RealTropOnePoly} provide a method to compute the real tropicalization of semi-algebraic sets defined by one polynomial. 

In \cite{NonNegCone}, the authors introduced the DSONC cone, which is a cone contained in the interior of the SONC cone. A remarkable property of the DSONC cone is that it is possible to check membership using linear programming. This might give a more efficient method to compute $\Sigma(f)$ and to verify the genericity condition in Corollary \ref{Cor_RealTropOnePoly}.

Finally, an additional consequence of Theorem \ref{Thm_Inclusions}, which is an interesting result on its own, is that any logarithmic image of $f^{-1}(\mathbb{R}_{<0})$ is a bounded set if the boundary of $\N(f)$ does not contain any negative exponent vector (Corollary \ref{Cor_BoundedNegPre}).

\subsection*{Notation} $\mathbb{R}_{\geq0}$, $\mathbb{R}_{>0}$ and $\R_{<0}$ refer to the sets of nonnegative, positive and negative real numbers respectively. For two vectors $v,w \in \mathbb{R}^{n}$, $v \cdot w$ denotes the Euclidean scalar product, and $v \ast w$ denotes the coordinate-wise product of $v$ and $w$. We denote the interior of a set $X \subseteq \mathbb{R}^{n}$ by $\inte(X)$. If $X \subseteq \mathbb{R}^{n}$ is a polyhedron, $\relint(X)$ denotes the relative interior of $X$. The symbol $\#S$ denotes the cardinality of a finite set $S$.

\section{Negative faces of the Newton polytope}
\label{Sec_Prelim}

 Following \cite{duffin1973geometric, FuzzyGeo}, a \emph{signomial} is a function
\begin{align*}
f\colon \mathbb{R}^{n}_{>0} \to \mathbb{R}, \qquad x \mapsto f(x) = \sum_{\mu \in \sigma(f)} c_{\mu} x^{\mu},
\end{align*}
where $\sigma(f) \subseteq \mathbb{R}^{n}$ is a finite set, called the \emph{support} of $f$, and the \emph{coefficients} $c_{\mu} \in \mathbb{R}$ are non-zero. We use the notation $x^{\mu}$ for $x_1^{\mu_1} \cdots x_n^{\mu_n}$. Thus, a signomial is a polynomial with real exponents whose domain is restricted to the positive orthant. For a set $S \subseteq \mathbb{R}^{n}$, we define the \emph{restriction} of $f$ to $S$ as
\begin{align*}
f_{|S}(x) := \sum_{\mu \in \sigma(f) \cap S} c_{\mu} x^{\mu}.
\end{align*}
Furthermore, we divide the support of $f$ into the set of\emph{ positive and negative exponent vectors}:
\[ \sigma_+(f) := \{ \mu \in \sigma(f) \mid c_{\mu} > 0 \},  \qquad \sigma_-(f) := \{ \mu \in \sigma(f) \mid c_{\mu} < 0 \}. \]
Thus, a positive exponent vector $\mu$ is an exponent vector of a monomial $x^{\mu}$ of $f$ such that the corresponding coefficient $c_\mu$ is positive.

\medskip

The \emph{Newton polytope} of $f$ is the convex hull of the support
\begin{align*}
\N(f) := \Conv\big(\sigma(f)\big).
\end{align*}
We refer to Figure~\ref{FIG1}(a) for an illustration of the Newton polytope of $f = x_1^2-x_1+1 - x_2^2$. The set of negative exponent vectors is $\sigma_-(f) = \{(1,0),(0,2)\}$ and the set of positive exponent vectors is $\sigma_+(f) =  \{(0,0),(2,0)\}$.

We recall some basic notions in polyhedral geometry that are relevant to the study of real tropicalizations (see  \cite{Ziegler_book, BasicsOnPolytopes, JoswigTheobald_book} for details). Each vector $v \in \mathbb{R}^{n}$ cuts out the \emph{face} of $\N(f)$ \cite[Section 15.1.1]{BasicsOnPolytopes} given by
\begin{align}
\label{Eq_vface}
\N(f)_v := \{ \mu \in \N(f) \mid v \cdot \mu = \max_{\nu \in \N(f)} v \cdot \nu\}.
\end{align}
The vector $v$ is called an \emph{outer normal vector} of $\N(f)_v$, or normal vector for short. A face is \emph{proper} if it is not the entire polytope.

For a face $F \subseteq \N(f)$, the set of all vectors $v$ such that $F \subseteq \N(f)_v$ form a closed convex polyhedral cone, called the \emph{normal cone} of $F$ \cite[Section 15.2.2]{BasicsOnPolytopes}:
\begin{align}
\label{Eq_DefNormalCone}
\mathcal{N}_{f}(F) = \{ v \in \mathbb{R}^{n} \mid F \subseteq \N(f)_v \} .
\end{align}
Furthermore, for $v \in \mathcal{N}_f(F)$, it holds:
\begin{align}
\label{Eq_BdCorr}
\N(f)_v = F \quad &\Longleftrightarrow \quad v \in \relint \mathcal{N}_f(F).
\end{align}
The collection of the normal cones of all faces forms a \emph{complete fan} $\mathcal{N}_f$ \cite[Example 7.3]{Ziegler_book}.

Taking normal cones induces an inclusion reversing correspondence between faces and their normal cones, which behaves well with dimensions. Hence, for any two non-empty faces $F,G \subseteq \N(f)$, it holds that:
\begin{align}
\label{Eq_FaceConeCorr}
F \subseteq G \quad &\Longleftrightarrow \quad \mathcal{N}_f(F) \supseteq \mathcal{N}_f(G), \\
\label{Eq_DimCorr}
\dim F &= n - \dim \mathcal{N}_f(F).
\end{align}

To study several signomials $f_1, \dots, f_k$ at the same time, it will be beneficial to consider the \emph{common refinement} of the normal fans $\mathcal{F}_{f_1}, \dots , \mathcal{F}_{f_k}$\cite[Definition 7.6]{Ziegler_book}, which is the fan defined as
\begin{align}
\label{Eq_CommonRefinement}
\bigwedge_{i=1}^{k} \mathcal{N}_{f_i} := \Big\{ \bigcap_{i=1}^{k} C_i \mid C_i \in \mathcal{N}_{f_i} \Big\}.
\end{align}
It is easy to see from \eqref{Eq_CommonRefinement} that full-dimensional cones in the common refinement are intersections of full-dimensional cones. Note that for every pair of vectors $v,v'$ in the relative interior of a cone in the common refinement, it holds that
\begin{align}
\label{Eq_RelIntCommonRef}
\N(f_i)_v = \N(f_i)_{v'} \qquad \text{for all } i=1,\dots ,k.
\end{align}

\medskip

Several arguments in this work rely on the following easy observation. Each $v \in \mathbb{R}^{n}$ and $x \in \mathbb{R}^{n}_{>0}$ induce a signomial in one variable
\begin{align}
\label{Eq_InducedUniv}
\mathbb{R}_{>0} \to \mathbb{R},  \qquad t \mapsto f(t^v \ast x) = \sum_{\mu \in \sigma(f)} c_\mu x^\mu t^{v \cdot \mu},
\end{align}
where $t^v$ is short for the vector $(t^{v_1}, \dots , t^{v_n})$. We call a coefficient of a univariate signomial the \emph{leading coefficient} ($\LC$) if the exponent of the accompanying monomial is the largest. From \eqref{Eq_vface} it follows that for fixed $x \in \mathbb{R}^{n}_{>0}$
\[ \LC (f(t^v \ast x) ) =  \sum_{\mu \in \sigma(f) \cap \N(f)_v} c_\mu x^\mu,\]
if we view $f(t^v \ast x)$ as a univariate signomial in the variable $t$. This implies the following well-known fact (see e.g. \cite[Proposition 2.3]{MultDualPhos})
\begin{lemma}
\label{Lemma_Pushing}
Let $f\colon \mathbb{R}^{n}_{>0} \to \mathbb{R}$ be a signomial and $v \in \mathbb{R}^{n}$. If $f_{|N(f)_{v}}(x) < 0$ for some $x \in \mathbb{R}^{n}_{>0}$, then there exists $T \in \mathbb{R}_{>0}$ such that for all $t > T$:
\[ f(t^v \ast x) < 0 .\]
\end{lemma}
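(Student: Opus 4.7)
The plan is to treat $g(t) := f(t^v \ast x)$ from \eqref{Eq_InducedUniv} as a univariate signomial in $t$ with $x$ fixed, and perform a leading-term analysis as $t \to \infty$. Setting $m := \max_{\nu \in \N(f)} v \cdot \nu$, the defining property \eqref{Eq_vface} of $\N(f)_v$ splits the support cleanly into $A := \sigma(f) \cap \N(f)_v$, on which $v \cdot \mu = m$, and $B := \sigma(f) \setminus \N(f)_v$, on which $v \cdot \mu < m$ holds strictly. This strict inequality is the only point where the geometry of $\N(f)_v$ enters the argument.

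Factoring $t^m$ out of $g(t)$ I would write
\[
g(t) = t^m \Bigl( \sum_{\mu \in A} c_\mu x^\mu \;+\; \sum_{\mu \in B} c_\mu x^\mu \, t^{v\cdot \mu - m} \Bigr).
\]
The first sum is exactly $f_{|\N(f)_v}(x)$, which is strictly negative by hypothesis. Every exponent $v\cdot \mu - m$ appearing in the second sum is strictly negative, so each term tends to $0$ as $t \to \infty$, and since the sum is finite the whole second sum vanishes in the limit. Hence the parenthesised expression converges to $f_{|\N(f)_v}(x) < 0$.

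From here I would conclude by continuity: there exists $T > 0$ such that for all $t > T$ the expression in parentheses stays negative (bounded away from zero by, say, $\tfrac12 f_{|\N(f)_v}(x)$). Multiplying by the positive factor $t^m$ preserves the sign, giving $f(t^v \ast x) = g(t) < 0$ for all $t > T$, as required.

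There is no serious obstacle in this proof; the only step needing care is distinguishing the terms with $v\cdot\mu = m$ from those with $v\cdot\mu < m$, which is immediate from the definition of $\N(f)_v$. The rest is a routine asymptotic comparison of monomials in a single variable.
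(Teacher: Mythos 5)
Your argument is correct and is essentially the one the paper sketches: the paper observes directly before the lemma that the leading coefficient of the univariate signomial $t \mapsto f(t^v \ast x)$ is $\sum_{\mu \in \sigma(f) \cap \N(f)_v} c_\mu x^\mu = f_{|\N(f)_v}(x)$ and cites the conclusion as well-known. Your factoring out of $t^m$ and the subsequent limit/continuity argument simply make that leading-coefficient observation explicit, so there is no meaningful difference in approach.
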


To find all vectors $v \in \mathbb{R}^{n}$ for which Lemma \ref{Lemma_Pushing} applies, we have to decide for which faces $F \subseteq \N(f)$, the signomial $f_{|F}$ takes negative values. A necessary condition for $f_{|F}$ to take negative values is that the face $F$ contains negative exponent vectors of $f$. Motivated by this observation, we call a face $F \subseteq \N(f)$ a \emph{negative face} if $F \cap \sigma_- (f) \neq \emptyset$. Furthermore, we define the \emph{negative normal cone} of $\N(f)$ as the union of the normal cones corresponding to the negative faces:
\[ \mathcal{N}_f^- =  \bigcup_{\begin{subarray}{c}F \subseteq \N(f) \\ \text{ negative face}\end{subarray}} \mathcal{N}_{f}(F) = \{ v \in \mathbb{R}^{n} \mid \N(f)_v \cap \sigma_-(f) \neq \emptyset \}.\]
Note that $\mathcal{N}_f^-$ is a cone, i.e. it is closed under multiplication by positive scalars, but $\mathcal{N}_f^-$ does not need to be convex.

Even if the face $F$ contains negative exponent vectors of $f$ there is no guarantee that $f_{|F}$ actually takes negative values. For an example, take the signomial $f = x_1^2-x_1+1 - x_2^2$ from Figure~\ref{FIG1} and the face $F = \Conv\big( (0,0) , (2,0) \big)$.

For a signomial $f$, we define the \emph{actual negative normal cone} as
\begin{align*}
\Sigma(f) := \Big\{ v \in \mathbb{R}^{n} \mid  f_{|\N(f)_v}^{-1}(\mathbb{R}_{<0} ) \neq \emptyset \Big\} \subseteq \mathcal{N}^-_f.
\end{align*}
In Proposition \ref{Prop_AcNegCone_closed}, we show that $\Sigma(f)$ is the union of normal cones corresponding to faces $F \subseteq \N(f)$ such that $f_{|F}$ takes negative values. The actual negative normal cone and the negative normal cone of  $f = x_1^2-x_1+1 - x_2^2$ are depicted in Figure~\ref{FIG1}(b)(c).

 Define the \emph{actual negative normal cone} of a collection of signomials $f_1, \dots , f_k$ as:
\begin{align}
\label{Eq_AcNegNormalCone}
\Sigma(f_1, \dots , f_k) := \Big\{ v \in \mathbb{R}^{n} \mid \bigcap_{i=1}^{k} f_{i|\N(f_i)_v}^{-1}(\mathbb{R}_{<0} ) \neq \emptyset \Big\}.
\end{align}

\begin{figure}[t]
\centering
\begin{minipage}[h]{0.3\textwidth}
\centering
\includegraphics[scale=0.4]{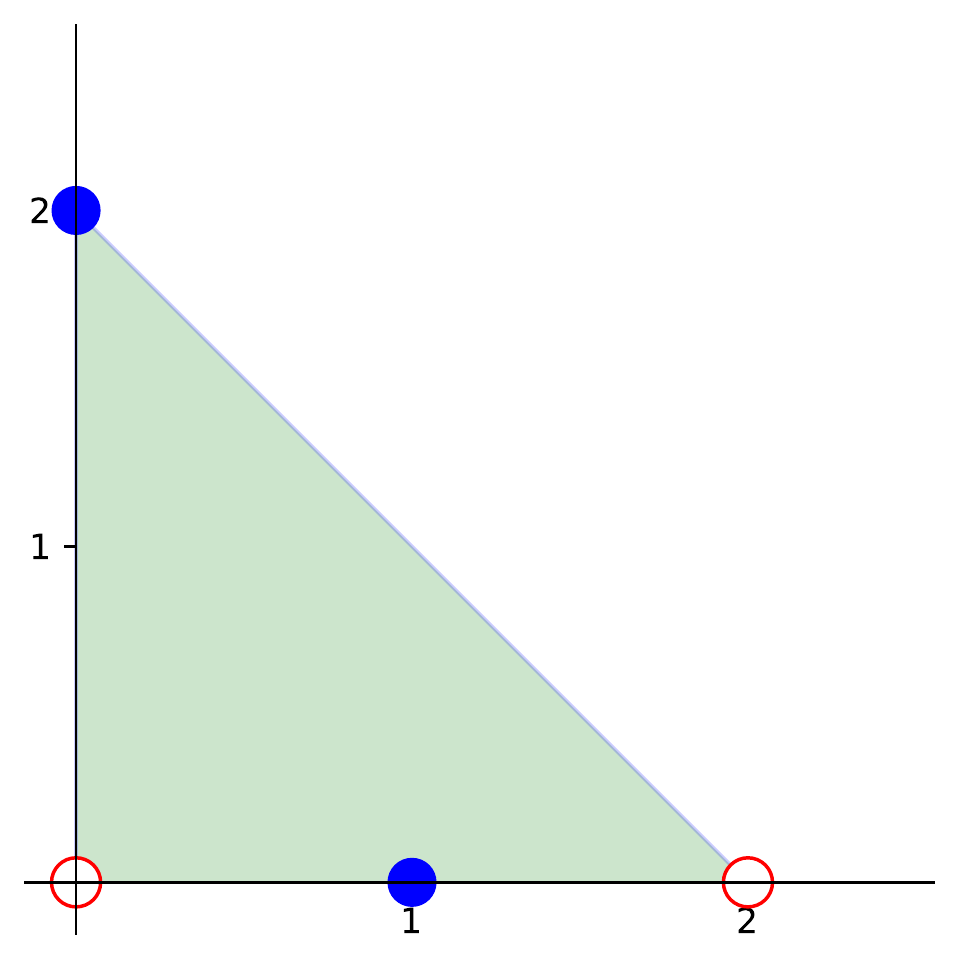}

{\small (a) $\N(f)$}
\end{minipage}
\begin{minipage}[h]{0.3\textwidth}
\centering
\includegraphics[scale=0.4]{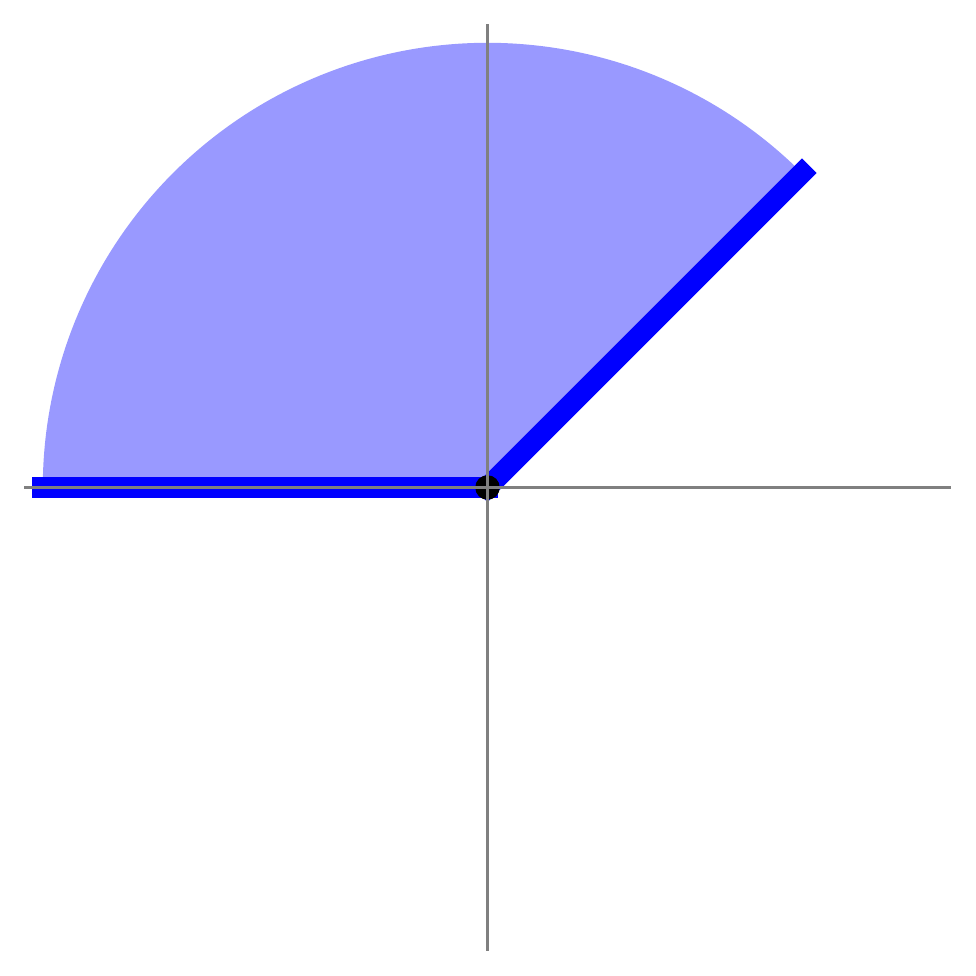}

{\small (b) $\Sigma(f)$ }
\end{minipage}
\begin{minipage}[h]{0.3\textwidth}
\centering
\includegraphics[scale=0.4]{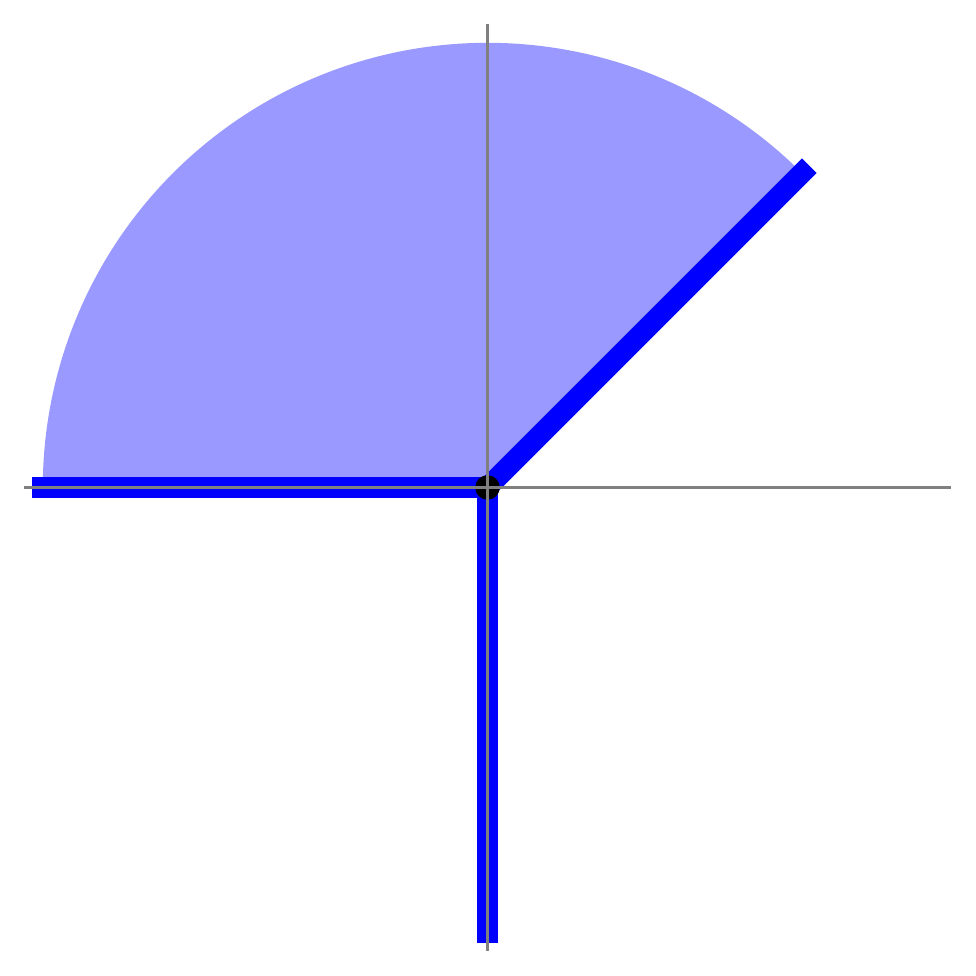}

{\small (c) $\mathcal{N}_f^-$}
\end{minipage}

\begin{minipage}[h]{0.3\textwidth}
\centering
\includegraphics[scale=0.4]{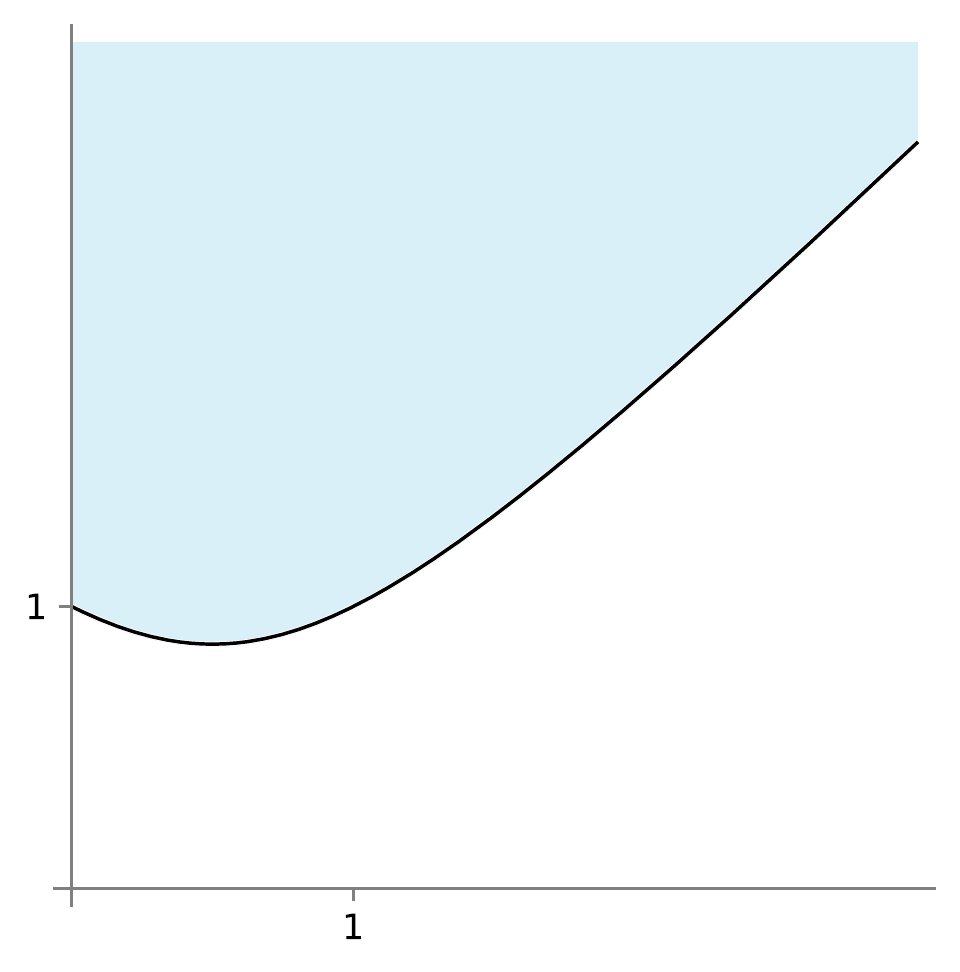}

{\small (d) $f^{-1}(\mathbb{R}_{<0})$ }
\end{minipage}
\begin{minipage}[h]{0.3\textwidth}
\centering
\includegraphics[scale=0.4]{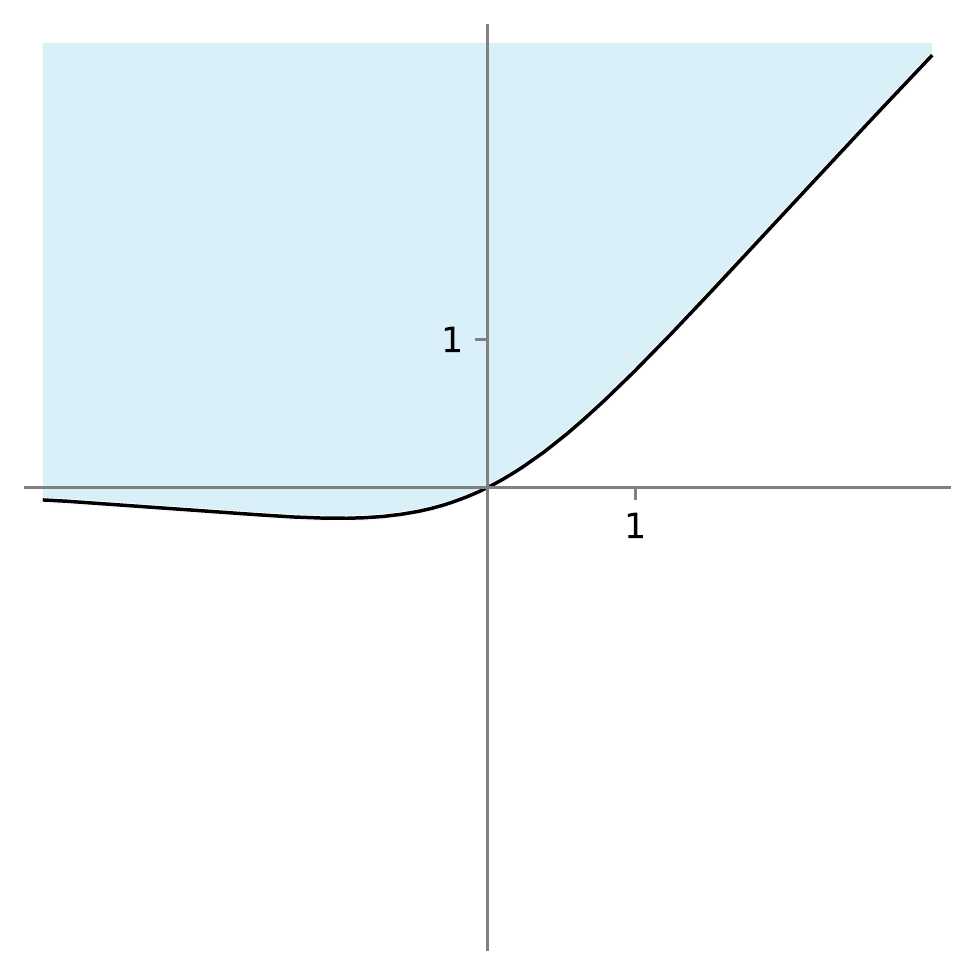}

{\small (e) $\Log_2\big(f^{-1}(\mathbb{R}_{<0})\big)$}
\end{minipage}
\begin{minipage}[h]{0.3\textwidth}
\centering
\includegraphics[scale=0.4]{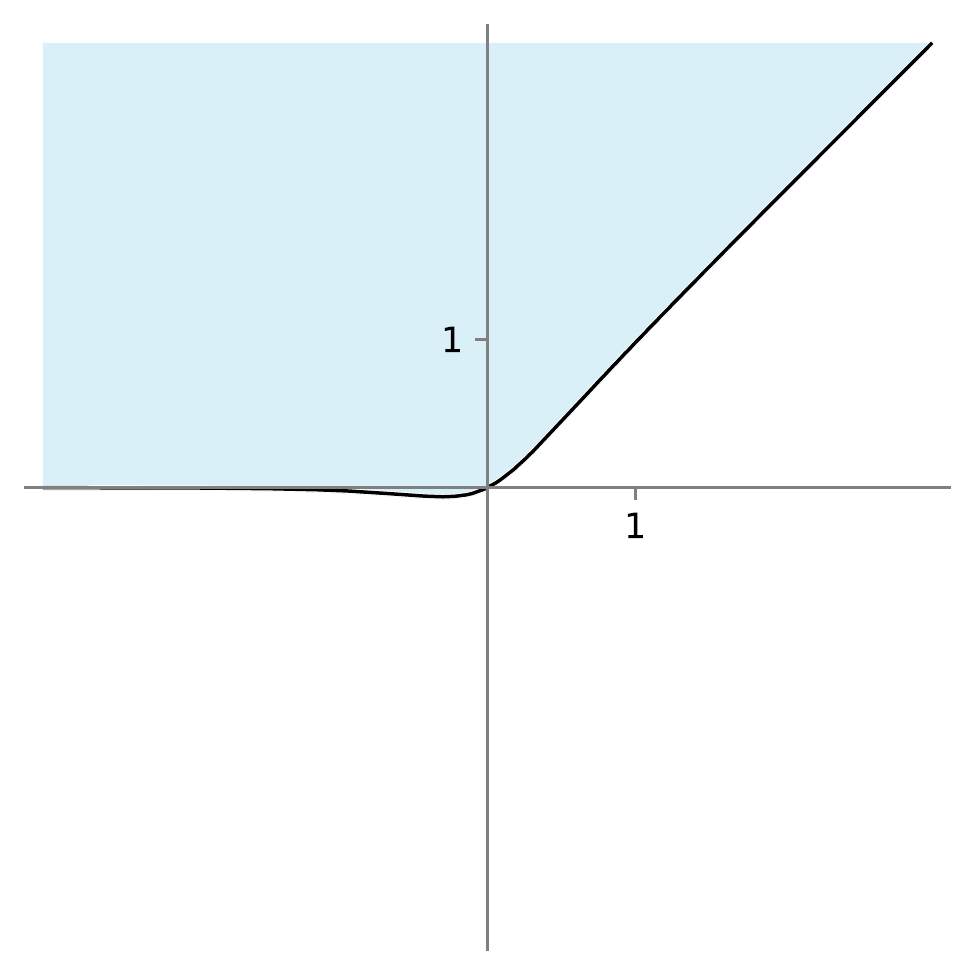}

{\small (f) $\Log_{10}\big(f^{-1}(\mathbb{R}_{<0})\big)$}
\end{minipage}
\caption{{\small (a) Newton polytope of $f = x_1^2-x_1+1 - x_2^2$. Blue dots correspond to negative exponents vectors, red circles correspond to positive exponent vectors. (b),(c) actual negative normal cone and negative normal cone of $f$, (d) semi-algebraic set defined by $f$, (e),(f) logarithmic images of $f^{-1}(\mathbb{R}_{<0})$.}}\label{FIG1}
\end{figure}

In the proof of Theorem \ref{Thm_Inclusions}, we need that the negative normal cones  $\mathcal{N}^-_{f_i},  i=1, \dots , k$ are closed subsets of $\mathbb{R}^{n}$. This property follows easily from the fact that a polytope has finitely many faces \cite[Theorem 3.46]{JoswigTheobald_book} and the cones  $\mathcal{N}_{f_i}(F)$, $F$ is a face of $\N(f_i)$, are closed. The actual negative normal cone $\Sigma(f_1, \dots ,f_k)$ is also closed, as the following argument shows.

\begin{prop}
\label{Prop_AcNegCone_closed}
Let $f_1, \dots , f_k$ be signomials on $\mathbb{R}^{n}_{>0}$, let $C$ be a cone in the common refinement $\bigwedge_{i=1}^{k} \mathcal{N}_{f_i}$ and $v \in \relint(C)$. If $v \in \Sigma(f_1, \dots ,f_k)$, then $C \subseteq \Sigma(f_1, \dots ,f_k)$. In particular, $\Sigma(f_1, \dots ,f_k)$ is a closed subset of $\mathbb{R}^n$.
\end{prop}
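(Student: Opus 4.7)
The plan is to use Lemma \ref{Lemma_Pushing} to transport the negative witness at $v$ along the direction $v$ itself, so that it simultaneously certifies negativity for all the restricted signomials $f_{i|\N(f_i)_w}$ for every $w \in C$.

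First, I would unpack the cone structure of the common refinement. By definition $C = \bigcap_{i=1}^k \mathcal{N}_{f_i}(F_i)$, where, thanks to \eqref{Eq_RelIntCommonRef} applied to $v \in \relint(C)$, $F_i := \N(f_i)_v$. For any $w \in C$ this yields $F_i \subseteq \N(f_i)_w$, and because $F_i$ already realizes the $v$-maximum over the whole polytope $\N(f_i)$, it continues to realize the $v$-maximum over the subpolytope $\N(f_i)_w$. Viewing $g_i := f_{i|\N(f_i)_w}$ as a signomial with Newton polytope $\N(f_i)_w$, this says $\N(g_i)_v = F_i$, so that $g_{i|\N(g_i)_v} = f_{i|F_i}$. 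By hypothesis we may pick $x \in \mathbb{R}^n_{>0}$ with $f_{i|F_i}(x) < 0$ for all $i$; Lemma \ref{Lemma_Pushing} applied to each $g_i$ at the point $x$ and direction $v$ yields thresholds $T_1, \dots, T_k > 0$ with $g_i(t^v \ast x) < 0$ for $t > T_i$. Taking any $t > \max_i T_i$ and setting $y := t^v \ast x$ gives a common negativity certificate, so $w \in \Sigma(f_1, \dots , f_k)$.

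For the closedness claim, the common refinement $\bigwedge_{i=1}^k \mathcal{N}_{f_i}$ is a finite complete fan, so every point of $\mathbb{R}^n$ lies in the relative interior of exactly one of its cones. The first part of the proposition then shows that $\Sigma(f_1, \dots , f_k)$ is precisely the union of those (finitely many) cones of the refinement whose relative interior meets $\Sigma(f_1, \dots , f_k)$; as a finite union of closed cones, it is closed. The only subtlety is the identification $\N(g_i)_v = F_i$, which reduces to the fact that a face of $\N(f_i)$ containing $F_i$ inherits its $v$-maximum from $F_i$; beyond this I do not anticipate any real obstacle.
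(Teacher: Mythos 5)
Your argument is correct and essentially reproduces the paper's proof: identify $F_i=\N(f_i)_v$, observe that for $w\in C$ one has $F_i=(\N(f_i)_w)_v\subseteq \N(f_i)_w$, apply Lemma~\ref{Lemma_Pushing} to each restriction $f_{i|\N(f_i)_w}$ in the direction $v$ and take the maximum of the thresholds, then deduce closedness from the fact that $\Sigma(f_1,\dots,f_k)$ is a finite union of closed cones of the common refinement. The only cosmetic difference is that the paper invokes \cite[Theorem~6.5]{Rockafellar} to pass from $v\in\relint(C)$ to $v\in\relint(C_i)$ rather than citing \eqref{Eq_RelIntCommonRef}, but the content is the same.
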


\begin{proof}
Let $C_i \in \mathcal{N}_{f_i}$, $i = 1 ,\dots ,k$ such that $C = \cap_{i=1}^{k} C_i$. Since $v \in \relint(C)$, from \cite[Theorem 6.5.]{Rockafellar} follows that $v \in \relint(C_i)$ for all $i = 1, \dots ,k$.

Let $F_i := \N(f_i)_v, i = 1, \dots , k$. Since $v \in \relint(C)$, for any $w \in C$ we have that 
\[ F_i  = \big(\N(f_i)_w \big)_v \subseteq \N(f_i)_w\]
for all $i = 1,\dots,k$ by \eqref{Eq_DefNormalCone} and \eqref{Eq_BdCorr}.

Consider $x \in\bigcap_{i=1}^{k} f_{i|F_i}^{-1}(\mathbb{R}_{<0} )$, which exists since $v \in \Sigma(f_1, \dots ,f_k)$. By Lemma \ref{Lemma_Pushing}, there exists $T > 0 $ such that
\[t^v \ast x \in \bigcap_{i=1}^{k} f_{i|\N(f_i)_w}^{-1}(\mathbb{R}_{<0} ), \qquad \text{for } t > T,\]
which implies that $w \in \Sigma(f_1, \dots ,f_k)$. Hence, $C \subseteq \Sigma(f_1, \dots ,f_k)$.

This argument shows that $\Sigma(f_1, \dots ,f_k)$ is the union of the cones in $\bigwedge_{i=1}^{k} \mathcal{N}_{f_i}$, whose relative interior intersects $\Sigma(f_1, \dots ,f_k)$. Since there exist only finitely many cones in $\bigwedge_{i=1}^{k} \mathcal{N}_{f_i}$ and they are closed, it follows that $\Sigma(f_1, \dots ,f_k)$ is a closed set.
\end{proof}

Proposition \ref{Prop_AcNegCone_closed} ensures that to compute the actual negative normal cone $\Sigma(f_1, \dots , f_k)$, it is enough to check for finitely many $v \in \mathbb{R}^{n}$ whether
\begin{align}
\label{Eq_FiniteCheck}
\bigcap_{i=1}^{k} f_{i|\N(f_i)_v}^{-1}(\mathbb{R}_{<0} ) \neq \emptyset.
\end{align}
To be precise, for each cone $C \in \bigwedge_{i=1}^{k} \mathcal{N}_{f_i}$, pick $v \in \relint(C)$ and check whether \eqref{Eq_FiniteCheck} is empty. This observation provides a way to compute $\Sigma(f_1, \dots ,f_k)$. Note that, computing the negative normal cone $\mathcal{N}^-_{f_i}$ is significantly simpler and in some cases still gives enough information about the real tropicalization (Theorem \ref{Thm_Inclusions}, Corollary \ref{Cor_MaxSparse}). 

\medskip

We conclude the section with a bound on the roots of a univariate signomial that will be used in the proofs of Theorem \ref{Thm_Inclusions} and Theorem \ref{Lemma_SigmaTrop}. The statement is a generalization of Cauchy's bound \cite[Theorem 8.1.7]{rahman2002analytic} to signomials. Although the idea of the proof is almost the same as in the polynomial case, we recall it for the sake of completeness.

\begin{lemma}
\label{Lemma_RootLimitNew}
Let $g\colon \mathbb{R}_{>0} \to \mathbb{R},\, g(t) := \sum_{i = 1}^{d} a_i t^{\nu_i}$ be a univariate signomial such that $\nu_1 < \dots < \nu_d$.
If there exist  $p \in \{1, \dots , d\}$ and $\epsilon >0$ such that $(a_d - \epsilon) t^{\nu_d} + \sum_{i=p}^{d-1} a_i t^{\nu_i} > 0$ for all $t > 0$, then for all $0 < \delta \leq \nu_p - \nu_{p-1}$ it holds:
\[ \max \{ t_0 \in \mathbb{R}_{>0} \mid g(t_0) = 0 \} \leq \max \Big\{ 1, \Big( \tfrac{1}{\epsilon} \sum_{i=1}^{p-1} \lvert a_{i} \rvert \Big)^{\tfrac{1}{\delta}} \Big\}. \]
\end{lemma}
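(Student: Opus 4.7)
The plan is to mimic the classical Cauchy bound argument, splitting the support of $g$ into the ``high" part $\{\nu_p, \dots, \nu_d\}$ (which, after subtracting $\epsilon t^{\nu_d}$ from the top term, is positive by hypothesis) and the ``low" part $\{\nu_1, \dots, \nu_{p-1}\}$. If $t_0 > 0$ is a root of $g$, I would rearrange $g(t_0) = 0$ as
\[
\epsilon\, t_0^{\nu_d} + \Big[(a_d - \epsilon)\, t_0^{\nu_d} + \sum_{i=p}^{d-1} a_i\, t_0^{\nu_i}\Big] \;=\; -\sum_{i=1}^{p-1} a_i\, t_0^{\nu_i}.
\]
The bracketed term is strictly positive by assumption, so discarding it and applying the triangle inequality on the right gives
\[
\epsilon\, t_0^{\nu_d} \;<\; \sum_{i=1}^{p-1} |a_i|\, t_0^{\nu_i}.
\]

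Next, I would split into the two cases encoded by the $\max\{1,\cdot\}$ in the conclusion. If $t_0 \leq 1$, there is nothing more to prove. Otherwise $t_0 > 1$, so $t_0^{\nu_i} \leq t_0^{\nu_{p-1}}$ for every $i \leq p-1$, whence
\[
\epsilon\, t_0^{\nu_d - \nu_{p-1}} \;<\; \sum_{i=1}^{p-1} |a_i|.
\]
Since $\nu_d \geq \nu_p$, the exponent satisfies $\nu_d - \nu_{p-1} \geq \nu_p - \nu_{p-1} \geq \delta$; using $t_0 > 1$ once more, $t_0^{\delta} \leq t_0^{\nu_d - \nu_{p-1}}$, and taking $\delta$-th roots yields the stated bound. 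The edge cases are automatic: when $p = 1$ the low sum is empty, the hypothesis forces $a_d > \epsilon > 0$ and $g$ is eventually positive, and the conclusion collapses to the trivial bound $1$; when $p = d$ the hypothesis simply says $a_d > \epsilon > 0$ and the argument goes through unchanged.

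There is no real obstacle here beyond bookkeeping. The only subtlety is making sure the hypothesis is used exactly where a classical Cauchy-style proof uses positivity of the leading coefficient: it is the positivity of $(a_d-\epsilon)t^{\nu_d} + \sum_{i=p}^{d-1} a_i t^{\nu_i}$ that lets us drop the middle block and keep only $\epsilon\, t_0^{\nu_d}$ on the left. The flexibility to choose the splitting index $p$ (rather than always $p=d$, as in the polynomial case) is precisely what will be exploited when this lemma is applied in Theorems~\ref{Thm_Inclusions} and~\ref{Lemma_SigmaTrop} to control roots of the univariate signomials arising from Lemma~\ref{Lemma_Pushing}.
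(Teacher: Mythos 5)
Your proof is correct and follows essentially the same route as the paper: set aside the case $t_0\le 1$, rearrange $g(t_0)=0$, drop the positive block $(a_d-\epsilon)t_0^{\nu_d}+\sum_{i=p}^{d-1}a_i t_0^{\nu_i}$, apply the triangle inequality, and use $t_0>1$ together with $\nu_d-\nu_{p-1}\ge\delta$ to isolate $t_0^\delta$. The only cosmetic difference is that the paper bounds $t_0^{\nu_i}\le t_0^{\nu_d-\delta}$ directly and then divides, whereas you pass through $t_0^{\nu_{p-1}}$ first; the underlying estimate is the same.
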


\begin{proof}
Let $t_0 > 1$ be such that $g(t_0) = 0$. Then,
\[ \epsilon \, t_0^{\nu_d} < \sum_{i = p}^{d} a_i t_0^{\nu_i} = - \sum_{i = 1}^{p-1} a_i t_0^{\nu_i} \leq \sum_{i = 1}^{p-1} \lvert a_i \lvert t_0^{\nu_i} \leq \sum_{i = 1}^{p-1} \lvert a_i \lvert t_0^{\nu_d - \delta} .\]
The first inequality follows from $(a_d - \epsilon) t_0^{\nu_d} + \sum_{i=p}^{d-1} a_i t_0^{\nu_i} > 0$. The last inequality follows from $\nu_d - \delta \geq \nu_{i}$ for all $i = 1, \dots , p-1$. Dividing both sides by $\epsilon \, t_0^{\nu_d - \delta} $, the statement follows.
\end{proof}

\begin{remark}
In Section \ref{Sec_RealTrop}, Lemma \ref{Lemma_RootLimitNew} will be used to ensure that for a signomial $f$ and a convergent sequence $w(m) \to w$ in $\mathbb{R}^{n}$, the roots of $f(t^{w(m)})$ converge to the roots of $f(t^{w})$. 

This statement might fail for some signomials not satisfying the assumptions of Lemma \ref{Lemma_RootLimitNew} as the following example shows. Consider the signomial $f(x_1,x_2) = x_1^2 - x_1  + 1 - x_2^2$ from Figure~\ref{FIG1} and the sequence $w(m) = (1 , 1-\tfrac{1}{m}) \to w = (1,1)$. The induced functions are given by:
\[f(t^w) = -t +1, \qquad f(t^{w(m)}) = t^2 - t^{2 -\tfrac{2}{m}} -t +1.\]
The function $f(t^w)$ has a unique root $t = 1$. The function $f(t^{w(m)})$ has two positive real roots, one of which converges to $1$ and the other goes to infinity as $m \to \infty$.
\end{remark}

\section{Real tropicalization}
\label{Sec_RealTrop}
The goal of this section is to relate the real tropicalization of the set
\begin{align}
\label{Eq_Sf}
S(f_1, \dots , f_k) := \bigcap_{i=1}^{k}  f_i^{-1}( \mathbb{R}_{<0}),
\end{align}
where $f_1, \dots ,f_k$ are signomials on $\mathbb{R}^{n}_{>0}$, to the negative normal cones $\mathcal{N}_{f_1}^{-}, \dots ,\mathcal{N}_{f_k}^{-}$, and to the actual negative normal cone $\Sigma(f_1, \dots ,f_k)$. In the case that \eqref{Eq_Sf} is described by a single polynomial $f$, we show that the real tropicalization and the actual negative normal cone coincide for generic coefficients of $f$. Furthermore, $\Trop(S(f))$ equals the negative normal cone of $f$ if $f$ is maximally sparse (all exponent vectors of  $f$ are vertices of the Newton polytope).

\medskip

As indicated in the Introduction, following \cite{RealTrop2022, BLEKHERMAN2022108561}, the \emph{real tropicalization} of $S \subseteq \mathbb{R}_{>0}^{n}$ is defined to be
\begin{align}
\label{Eq::DefOfLogLimit}
\Trop(S) := \lim_{t \to \infty} \Log_{t}(S) = \lim_{t \to \infty}\tfrac{1}{\log_e(t)} \Log_e(S),
\end{align}
where $\Log_t$ denotes the point-wise logarithm with base $t$, and $e$ is Euler's number. This construction is also called the \emph{logarithmic limit set} of $S$. For a precise definition and for some basic properties, we refer to \cite[Section 2]{Alessandrini2007LogarithmicLS}. Since it will be used frequently, we recall the following results from \cite{Alessandrini2007LogarithmicLS}:

\begin{prop}
\cite[Proposition 2.2]{Alessandrini2007LogarithmicLS} 
\label{Prop_Alessandrini}
For any set $S \subseteq \mathbb{R}^{n}_{>0}$, it holds:
\begin{itemize}
\item[(i)] $\Trop(S)$ is a closed subset of $\mathbb{R}^{n}$.
\item[(ii)] For $w \in \mathbb{R}^{n}$, $w \in \Trop(S)$ if and only if there exist sequences $\big\{x(m)\big\}_{m\in \mathbb{N}} \subseteq S$ and $\big\{t(m)\big\}_{m\in \mathbb{N}}  \subseteq (1,\infty)$ such that $t(m) \to \infty$ and $\Log_{t(m)}(x(m)) \to w$.
\item[(iii)] $\Trop(S) \subset \{0\}$ if and only if $\Log_t(S)$ is bounded for all $t >0$.
\end{itemize}
\end{prop}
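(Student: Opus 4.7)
The plan is to prove the three parts in the order (ii), (i), (iii), since the sequential characterization in (ii) is the most convenient working description of $\Trop(S)$ and makes (i) and (iii) follow easily. The underlying observation is the scaling identity $\Log_t(x) = \frac{1}{\ln t}\Log_e(x)$ for $t > 1$, so the whole family $\{\Log_t(S) : t > 1\}$ is a rescaling of the single set $\Log_e(S)$ by a factor that tends to $0$ as $t \to \infty$.

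For (ii), I would unfold the definition of $\Trop(S)$ in \eqref{Eq::DefOfLogLimit} as the Hausdorff/Kuratowski limit of $\Log_t(S)$ as $t \to \infty$. One direction is immediate: any subsequential limit point of $\Log_{t(m)}(x(m))$ with $t(m)\to\infty$ and $x(m)\in S$ must lie in the limit. For the converse, the definition of the limit provides, for every open neighborhood $U$ of $w$ and every $T$, some $t>T$ and $x\in S$ with $\Log_t(x)\in U$; taking a countable basis of neighborhoods shrinking to $w$ together with a sequence $T(m)\to\infty$ produces the desired witnessing sequences. Part (i) then follows by a standard diagonal extraction: given $w_k\in\Trop(S)$ with $w_k\to w$, for each $k$ pick a witnessing pair $(t_k(m_k), x_k(m_k))$ with $t_k(m_k)>k$ and $\lVert\Log_{t_k(m_k)}(x_k(m_k))-w_k\rVert<1/k$; the resulting diagonal sequence certifies $w\in\Trop(S)$.

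For (iii), I would argue both directions from the scaling identity. If $\Log_e(S)$ is contained in a Euclidean ball of radius $R$, then $\Log_t(S)$ is contained in a ball of radius $R/\ln t$, which shrinks to $\{0\}$ as $t\to\infty$, so $\Trop(S)\subseteq\{0\}$; the corresponding statement for $0<t<1$ follows from $\Log_t(S)=-\Log_{1/t}(S)$. Conversely, if $\Log_e(S)$ is unbounded, choose $x(m)\in S$ with $r(m):=\lvert\Log_e(x(m))\rvert\to\infty$ and set $t(m):=e^{r(m)}$, so that $\Log_{t(m)}(x(m))=\Log_e(x(m))/r(m)$ is a unit vector. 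Compactness of the unit sphere yields a subsequence converging to some $w$ of unit length, giving $w\in\Trop(S)\setminus\{0\}$.

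The main obstacle I anticipate is not conceptual but merely bookkeeping: organising the diagonal extraction in (i) and handling the $t<1$ case in (iii) cleanly. There is no deep difficulty, since the statement is essentially a repackaging of the definition of the Hausdorff/Kuratowski limit through the scaling relation $\Log_t = \tfrac{1}{\ln t}\Log_e$.
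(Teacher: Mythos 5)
The paper itself does not prove this proposition; it is cited verbatim from Alessandrini's work, with the reader referred to \cite[Section 2]{Alessandrini2007LogarithmicLS} for the precise definition of the logarithmic limit set. So there is no in-paper proof to compare against, only the source.

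Your reconstruction is correct, and the logical structure (prove (ii), then derive (i) and (iii) from the sequential characterization together with the scaling relation $\Log_t = \tfrac{1}{\ln t}\Log_e$) is clean and efficient. The diagonal argument for (i) and the two directions of (iii) --- shrinking a fixed bounded image by $1/\ln t$, and normalizing an unbounded sequence in $\Log_e(S)$ to produce a unit-vector limit point --- are both sound. The one place where you implicitly gloss over content is (ii): you treat it as essentially a restatement of the definition by interpreting ``$\lim_{t\to\infty}\Log_t(S)$'' as a Kuratowski upper limit, for which the sequential description is indeed just an unfolding. In Alessandrini's paper, however, the logarithmic limit set is defined via a compactification of $\mathbb{R}^n$ (a closed ball), and Proposition~2.2(ii) is precisely the translation of that definition into the Euclidean sequential form you state, so it carries real (if modest) content there. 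Since the present paper deliberately leaves the precise definition to Alessandrini and only works with the sequential characterization, your choice to take (ii) as the effective working definition is reasonable and does no harm to the downstream arguments.
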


As discussed in the Introduction, if $f_1, \dots ,f_k$ are polynomials, that is their exponents are nonnegative integer vectors, it is known that
\begin{align}
\label{Eq_InclPoly2}
\Trop\big(S(f_1, \dots , f_k)\big) =\bigcap_{i=1}^{k} \mathcal{N}_{f_i}^-
\end{align}
if the set on the right is a regular set (it is equal to the closure of its interior) \cite[Corollary 4.8]{TropSpecta}. The following proposition characterizes the cases when the intersection of the negative normal cones in \eqref{Eq_InclPoly2} is regular.

\begin{prop}
\label{Lemma_CommonNegVertex}
Let $f_1, \dots , f_k$ be signomials on $\mathbb{R}^{n}_{>0}$. A point $w \in  \bigcap_{i=0}^{k} \mathcal{N}_{f_i}^-$ is in the closure of $\inte\big(  \bigcap_{i=0}^{k} \mathcal{N}_{f_i}^- \big)$ if and only if there exists $v\in \mathbb{R}^{n}$ such that $\N(f_i)_{v}$ is a negative vertex of  $\N(f_i)_{w}$ for all $i = 1, \dots ,k$.
\end{prop}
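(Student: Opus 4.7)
The plan is to establish the two implications separately, using that by \eqref{Eq_DimCorr} the cone $\mathcal{N}_{f_i}(F)$ is full-dimensional in $\mathbb{R}^{n}$ precisely when $F$ is a vertex of $\N(f_i)$, together with the face--cone correspondence \eqref{Eq_BdCorr}.

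For the \emph{if}-direction, given such $v$ I would set $F_i := \N(f_i)_v$ and look at the segment $w_s := (1-s)w + sv$ for $s \in (0,1]$. Maximizing $w_s \cdot \mu$ over $\mu \in \N(f_i)$ is dominated for small $s$ by the $w$-term, so the maximum is attained on $\N(f_i)_w$; within $\N(f_i)_w$ the term $s \, v \cdot \mu$ then selects the sub-face maximizing $v$, which by hypothesis is $F_i$. Hence $\N(f_i)_{w_s} = F_i$ for all sufficiently small $s>0$, so \eqref{Eq_BdCorr} puts $w_s$ in $\relint \mathcal{N}_{f_i}(F_i)$; since $F_i$ is a vertex, this cone is full-dimensional, giving $w_s \in \inte \mathcal{N}_{f_i}(F_i) \subseteq \inte \mathcal{N}_{f_i}^-$. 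Intersecting over $i$ yields $w_s \in \inte (\bigcap_{i=1}^{k} \mathcal{N}_{f_i}^-)$, and $w_s \to w$ as $s \to 0^{+}$.

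For the \emph{only if}-direction, I would start from a sequence $v_m \to w$ with $v_m \in \inte (\bigcap_{i=1}^{k} \mathcal{N}_{f_i}^-)$. The relative interiors of the full-dimensional cones of the common refinement $\bigwedge_{i=1}^{k} \mathcal{N}_{f_i}$ together form a dense open subset of $\mathbb{R}^{n}$, so after a perturbation of size tending to zero I may assume each $v_m$ lies in the relative interior of some full-dimensional cone $C^{(m)}$ of the refinement while still belonging to $\inte (\bigcap_{i=1}^{k} \mathcal{N}_{f_i}^-)$. Finiteness of the common refinement allows me to pass to a subsequence on which $C^{(m)} = C = \bigcap_{i=1}^{k} \mathcal{N}_{f_i}(u_i)$ is constant, each $u_i$ being a vertex of $\N(f_i)$ by \eqref{Eq_DimCorr}. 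Inside a small ball around $v_m$ contained in $\inte(C) \cap \inte \mathcal{N}_{f_i}^-$, every point $v'$ satisfies $\N(f_i)_{v'} = \{u_i\}$, and since $v' \in \mathcal{N}_{f_i}^-$ this face must be negative, forcing $u_i \in \sigma_-(f_i)$. Closedness of $\mathcal{N}_{f_i}(u_i)$ and $v_m \to w$ then give $w \in \mathcal{N}_{f_i}(u_i)$, i.e.\ $u_i \in \N(f_i)_w$, so $u_i$ is a negative vertex of $\N(f_i)_w$; any $v \in \relint(C)$ simultaneously realizes $\N(f_i)_v = u_i$ for all $i$ by \eqref{Eq_BdCorr}.

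The step I anticipate as the main technical issue is the perturbation in the only-if direction: one must know both that the union of the relative interiors of the full-dimensional cones of the common refinement is dense and open in $\mathbb{R}^{n}$, and that a sufficiently small perturbation preserves membership in the open set $\inte (\bigcap_i \mathcal{N}_{f_i}^-)$. Once that is arranged, the remaining arguments reduce to finite extraction together with the local constancy of $v \mapsto \N(f_i)_v$ on the relative interiors of cones of the normal fan.
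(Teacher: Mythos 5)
Your proof is correct and follows the same basic strategy as the paper's, but with a few notable variations in execution. In the \emph{if} direction, you approximate $w$ along the explicit segment $w_s = (1-s)w + sv$, checking directly that $\N(f_i)_{w_s} = \N(f_i)_v$ for small $s>0$ and hence that $w_s$ lies in $\inte \mathcal{N}_{f_i}(\N(f_i)_v)$ for every $i$. The paper instead observes that $w$ and $v$ both lie in the polyhedral cone $\bigcap_i \mathcal{N}_{f_i}(\beta_i)$ (with $\beta_i = \N(f_i)_v$), that $v$ witnesses a nonempty interior of this cone, and then invokes regularity of a full-dimensional polyhedral cone to conclude $w$ lies in the closure of that interior, which sits inside $\inte(\bigcap_i \mathcal{N}_{f_i}^-)$ since $\inte(\bigcap_i \mathcal{N}_{f_i}(\beta_i)) = \bigcap_i \inte(\mathcal{N}_{f_i}(\beta_i))$. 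Your version is slightly more hands-on; the paper's is shorter because it offloads the approximation to the regularity of polyhedral cones. In the \emph{only if} direction, you take a sequence $v_m \to w$ in $\inte(\bigcap_i \mathcal{N}_{f_i}^-)$, perturb into interiors of full-dimensional cones of the common refinement, extract a constant subsequence by finiteness, and use closedness of the common cone $C$ to get $w \in C$. The paper instead fixes a single $\epsilon$-ball $B_\epsilon(w)$ chosen to avoid the cones of the $(n-1)$-skeleton of $\bigwedge_i \mathcal{N}_{f_i}$ that miss $w$, picks one point $v \in B_\epsilon(w) \cap \inte(\bigcap_i \mathcal{N}_{f_i}^-)$ off the skeleton, and concludes $w \in C$ because any boundary face of $C$ meeting $B_\epsilon(w)$ must contain $w$. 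These are two routes to the same perturbation argument; yours trades a careful one-shot choice of $\epsilon$ for sequence extraction and finiteness, which is perhaps easier to verify step by step but slightly longer. The auxiliary facts you flag (density/openness of the union of relative interiors of full-dimensional cones in a complete fan, and stability of membership in an open set under small perturbations) are both standard, and your proof is complete once they are noted.
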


\begin{proof}
First, we show the if part. For each $i = 1, \dots ,k$, let $\beta_i := \N(f_i)_{v}$ be the negative vertex of $\N(f_i)_{w}$. It follows that $w$ is contained in
\begin{align}
\label{Eq_ProofLemmaNegVertex}
\bigcap_{i=1}^{k} \mathcal{N}_{f_i}(\beta_i).
\end{align}
In the following, we show that the interior of \eqref{Eq_ProofLemmaNegVertex} is non-empty. As vertices correspond to full-dimensional normal cones by \eqref{Eq_DimCorr}, the interior and the relative interior of $\mathcal{N}_{f_i}(\beta_i)$ coincide. By \eqref{Eq_BdCorr}, $v \in \relint( \mathcal{N}_{f_i}(\beta_i)) = \inte(\mathcal{N}_{f_i}(\beta_i))$, which implies that there exists an $\epsilon >0$ such that the ball with radius $\epsilon$ and center $v$ is contained in the interior of $\mathcal{N}_{f_i}(\beta_i)$ for all $i = 1, \dots ,k$. Thus, the interior of  \eqref{Eq_ProofLemmaNegVertex} is non-empty.

As the finite intersection of interiors equals the interior of the intersection \cite[Section 1.1]{Topo_Book_CountE}, we have
\begin{align}
\label{Eq_ProofLemmaNegVertex2}
\inte\big( \bigcap_{i=1}^{k} \mathcal{N}_{f_i}(\beta_i) \big) = \bigcap_{i=1}^{k} \inte \big( \mathcal{N}_{f_i}(\beta_i) \big) \subseteq \bigcap_{i=1}^{k} \inte \big( \mathcal{N}^-_{f_i}\big) = \inte \big( \bigcap_{i=1}^{k} \mathcal{N}^-_{f_i} \big),
\end{align}
where the inclusion in the middle holds since $\beta_i$ is a negative vertex of $\N(f_i)$ for all $i=1, \dots ,k$.

Since \eqref{Eq_ProofLemmaNegVertex} is a polyhedral cone with non-empty interior, it is regular and $w$ is contained in the closure of the interior of \eqref{Eq_ProofLemmaNegVertex}. Using \eqref{Eq_ProofLemmaNegVertex2}, one concludes that $w$ is contained in the closure of $ \inte \big( \bigcap_{i=1}^{k} \mathcal{N}^-_{f_i} \big)$.

\medskip

To show the reverse implication, we assume that $w$ is in the closure of $\inte\big( \bigcap_{i=0}^{k} \mathcal{N}^-_{f_i} \big)$. In that case, for every $\epsilon >0$ we have that $B_{\epsilon}(w) \cap\inte\big( \bigcap_{i=0}^{k} \mathcal{N}^-_{f_i} \big) \neq \emptyset$, where $B_{\epsilon}(w)$ denotes the ball with radius $\epsilon$ and center $w$.  We choose $\epsilon$ small enough such that $B_{\epsilon}(w)$ does not intersect the cones of the $(n-1)$-skeleton of $\bigwedge_{i=1}^{k} \mathcal{N}_{f_i}$ that do not contain $w$.

The $(n-1)$-skeleton of $\bigwedge_{i=1}^{k} \mathcal{N}_{f_i}$ is not full-dimensional, so there exists $v \in B_{\epsilon}(w) \cap\inte\big( \bigcap_{i=0}^{k} \mathcal{N}^-_{f_i} \big)$ which is not contained in the $(n-1)$-skeleton.

Let $C$ be the smallest cone in $\bigwedge_{i=1}^{k} \mathcal{N}_{f_i}$ that contains $v$ in its interior, and let $C_i \in \mathcal{N}_{f_i}$, $i = 1 , \dots ,k$ such that $C = \cap_{i=1}^k C_i$. From $v \in \inte(C)$ follows that $v \in \inte(C_i)$ for all $i=1,\dots,k$. From \eqref{Eq_BdCorr} and \eqref{Eq_DimCorr} it follows that $\N(f_i)_v$ is a vertex for all $i=1,\dots,k$. Moreover, $\N(f_i)_v$ is a negative vertex, since $v \in \mathcal{N}_{f_i}^-$.

Since  $B_{\epsilon}(w)$ intersects only the cones of the $(n-1)$-skeleton of $\bigwedge_{i=1}^{k} \mathcal{N}_{f_i}$ that contain $w$, it follows that $w$ lies in $C$. Using \eqref{Eq_FaceConeCorr}, we conclude that $\N(f_i)_v \subseteq \N(f_i)_w$.
\end{proof}

For an example of a signomial whose negative normal cone is not regular, we refer to Figure~\ref{FIG1}. The negative normal cone of $f(x_1,x_2) = x_1^2 - x_1  + 1 - x_2^2$ has a ray in southern direction, but the closure of $\inte( \mathcal{N}_f^-)$ does not contain this ray. One can also use Proposition  \ref{Lemma_CommonNegVertex} to conclude that $\mathcal{N}_f^-$ is not regular: The face $\Conv\big( (0,0),(2,0) \big) \subseteq \N(f)$ is negative, but does not contain any negative vertex.

\medskip

In \cite{TropSpecta,RealTrop_SemiAlgSet}, to prove \eqref{Eq_InclPoly} the authors worked with polynomials over the field of real Puiseux series. 
In the following theorem, we give an elementary proof of \eqref{Eq_InclPoly} that applies to signomials and extend \eqref{Eq_InclPoly} by relating the real tropicalization to the actual negative normal cone as defined in \eqref{Eq_AcNegNormalCone}.

\begin{thm}
\label{Thm_Inclusions}
For signomials $f_1, \dots , f_k$ on $\mathbb{R}^{n}_{>0}$, it holds:
\[\inte\big(  \bigcap_{i=1}^{k} \mathcal{N}^-_{f_i} \big) \subseteq \Sigma(f_1, \dots , f_k) \subseteq \Trop\big( S(f_1, \dots , f_k) \big) \subseteq \bigcap_{i=1}^{k} \mathcal{N}^-_{f_i}. \]
\end{thm}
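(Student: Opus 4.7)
The plan is to establish the three inclusions separately.

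\emph{Middle inclusion $\Sigma(f_1,\dots,f_k) \subseteq \Trop(S(f_1,\dots,f_k))$.}
For $v \in \Sigma(f_1,\dots,f_k)$, definition \eqref{Eq_AcNegNormalCone} provides $x \in \R^n_{>0}$ with $f_{i|\N(f_i)_v}(x) < 0$ for every $i$. Applying Lemma \ref{Lemma_Pushing} simultaneously to each $f_i$ yields $T > 0$ with $t^v \ast x \in S(f_1,\dots,f_k)$ for all $t > T$. Choosing a sequence $t_m \to \infty$ with $t_m > T$ and setting $x_m := t_m^v \ast x$ gives $\Log_{t_m}(x_m) = v + \Log_{t_m}(x) \to v$, so Proposition \ref{Prop_Alessandrini}(ii) yields $v \in \Trop(S(f_1,\dots,f_k))$.

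\emph{Left inclusion $\inte(\bigcap_{i=1}^k \mathcal{N}^-_{f_i}) \subseteq \Sigma(f_1,\dots,f_k)$.}
Any $v \in \inte(\bigcap_{i=1}^k \mathcal{N}^-_{f_i})$ lies in the closure of this interior, so Proposition \ref{Lemma_CommonNegVertex} supplies $u \in \R^n$ such that $\beta_i := \N(f_i)_u$ is a negative vertex of $\N(f_i)_v$ for every $i$. Fix any $x_0 \in \R^n_{>0}$ and regard $t \mapsto f_{i|\N(f_i)_v}(t^u \ast x_0)$ as a univariate signomial in $t$; its leading coefficient equals $c_{\beta_i} x_0^{\beta_i} < 0$ because $\beta_i$ is the unique maximizer of $u \cdot (\cdot)$ on $\N(f_i)_v$. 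Hence for $t$ sufficiently large, $f_{i|\N(f_i)_v}(t^u \ast x_0) < 0$ for all $i = 1,\dots,k$, so $v \in \Sigma(f_1,\dots,f_k)$.

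\emph{Right inclusion $\Trop(S(f_1,\dots,f_k)) \subseteq \bigcap_{i=1}^k \mathcal{N}^-_{f_i}$.}
Argue by contraposition: suppose $w \notin \mathcal{N}^-_{f_i}$ for some $i$, so every $\mu \in \sigma(f_i) \cap \N(f_i)_w$ has $c_\mu > 0$. If $w$ belonged to $\Trop(S(f_1,\dots,f_k))$, Proposition \ref{Prop_Alessandrini}(ii) would produce sequences $x(m) \in S(f_1,\dots,f_k)$ and $t(m) \to \infty$ with $w(m) := \Log_{t(m)}(x(m)) \to w$. Then $x(m) = t(m)^{w(m)}$ and $f_i(x(m)) = g_m(t(m))$ for the univariate signomial $g_m(t) := \sum_{\mu \in \sigma(f_i)} c_\mu t^{w(m) \cdot \mu}$. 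The idea is to apply Lemma \ref{Lemma_RootLimitNew} to $g_m$, with the top block consisting of exponents $w(m) \cdot \mu$ for $\mu \in \N(f_i)_w$ (all with positive coefficients), to produce a bound $B$ on the positive roots of $g_m$. Once $t(m) > B$ one has $f_i(x(m)) = g_m(t(m)) > 0$, contradicting $f_i(x(m)) < 0$.

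The main obstacle is ensuring that the bound $B$ from Lemma \ref{Lemma_RootLimitNew} does not depend on $m$. Two uniformities must hold for all large $m$: (i) the gap $\min_{\mu \in \N(f_i)_w} w(m) \cdot \mu - \max_{\mu \notin \N(f_i)_w} w(m) \cdot \mu$ is bounded below by a positive $\delta$, which follows from continuity and the fact that its limit is strictly positive by the definition of $\N(f_i)_w$; (ii) the leading coefficient of $g_m$ is a sum of positive $c_\mu$'s, hence bounded below by $\min_{\mu \in \sigma_+(f_i) \cap \N(f_i)_w} c_\mu > 0$, so $\epsilon$ in Lemma \ref{Lemma_RootLimitNew} can be chosen independently of $m$. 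Together these make the explicit bound in Lemma \ref{Lemma_RootLimitNew} uniform, closing the contradiction.
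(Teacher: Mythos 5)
Your proposal is correct and follows essentially the same route as the paper: Lemma \ref{Lemma_Pushing} together with Proposition \ref{Lemma_CommonNegVertex} for the first two inclusions, and Lemma \ref{Lemma_RootLimitNew} with a uniform gap $\delta$ and a uniform leading-coefficient bound $\epsilon$ for the third. The only cosmetic difference is that the paper first passes to a subsequence on which $\N(f_i)_{w(m)}$ is a fixed face $F$, whereas you obtain the needed uniformity directly from the gap condition; both handle the same issue.
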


\begin{proof}
Let $S = S(f_1, \dots , f_k)$. To show the first inclusion, let $w \in \inte\big(  \bigcap_{i=1}^{k} \mathcal{N}^-_{f_i} \big)$. By Proposition \ref{Lemma_CommonNegVertex},  there exists $v\in \mathbb{R}^{n}$ such that $\N(f_i)_v$ is a negative vertex of $\N(f_i)_w$ for all $i = 1, \dots , k$. For any fixed $x \in \mathbb{R}^{n}_{>0}$, we have
\[f_{i|\N(f_i)_w}(t^v \ast x) < 0 \qquad  \text{for} \quad t \gg 0\]
by Lemma \ref{Lemma_Pushing}. Thus, $w \in  \Sigma(f_1, \dots ,f_k)$.

\medskip

For the proof of the second inclusion, let $w \in \Sigma(f_1, \dots ,f_k)$, and let $x \in  \bigcap_{i=1}^{k} f_{i|\N(f_i)_w}^{-1}(\mathbb{R}_{<0} )$. By Lemma \ref{Lemma_Pushing}, $f_i(t^w \ast x) < 0$ for all $i = 1 , \dots ,k$ and $t \gg 0$. Therefore, $t^w \ast x \in S$ for $t \gg 0$. Choose a sequence $(t(m))_{m \in \mathbb{N}}$ such that $t(m) \to \infty$  and $t(m)^w \ast x \in S$ for all $m \in \mathbb{N}$. Then
\[ \Log_{t(m)} ( t(m)^w \ast x ) = \log_{t(m)}(t(m))  w  +   \Log_{t(m)} ( x)   = w + \tfrac{1}{\log_e(t(m))}\Log_e(x) \to w .\]
Proposition \ref{Prop_Alessandrini} implies that $w \in \Trop(S)$.

\medskip

The third inclusion remains to be shown. For that, let $w \in \Trop(S)$. By Proposition \ref{Prop_Alessandrini}, there exist sequences $\{x(m) \}_{m \in \mathbb{N}} \subseteq S$ and $\{ t(m) \}_{m \in \mathbb{N}} \subseteq  \mathbb{R}_{>0}$ such that $t(m) \to \infty$ and $w(m) := \Log_{t(m)}(x(m)) \to w$. Note that with this notation we have
\[t(m)^{w(m)} = x(m), \qquad \text{for all } m \in \mathbb{N} .\]

 If $w \notin \bigcap_{i=1}^{k} \mathcal{N}^-_{f_i}$, then there exists $i \in \{1, \dots , k\}$ such that $w \in \mathbb{R}^{n} \setminus  \mathcal{N}^-_{f_{i}}$. In the following, we show that it is possible to choose a subsequence $\{ \tilde{w}(m) \}_{m \in \mathbb{N}}$ of $\{ w(m) \}_{m \in \mathbb{N}}$ such that
\[f_{i}(t^{\tilde{w}(m)}) > 0 \qquad \text{for all} \quad t > T,\]
where $T >0$ can be chosen independently from $m$. This yields to a contradiction, since for large $m$, $t(m) > T$ and $f_{i}(t(m)^{\tilde{w}(m)})< 0$.

Since the relative interiors of the cones in  $\mathcal{N}_{f_{i}}$ form a partition of $\mathbb{R}^{n}$ and there are only finitely many such cones, there exists a face $F \subseteq \N(f_{i})$ such that $\relint(\mathcal{N}_{f_{i}}(F))$ contains infinitely many elements of $\{w(m)\}_{m \in \mathbb{N}}$. Thus, we can pass to a subsequence $\{\tilde{w}(m)\}_{m \in \mathbb{N}}$ such that $F = \N(f_{i})_{\tilde{w}(m)}$ for all $m \in \mathbb{N}$. Since $\tilde{w}(m) \to w$ and $\mathcal{N}_{f_{i}}(F)$  is closed, $w \in \mathcal{N}_{f_{i}}(F)$. Therefore $F \subseteq \N(f_{i})_w$. Note that equality holds if and only if $w \in \relint\big(\mathcal{N}_{f_{i}}(F) \big)$  by \eqref{Eq_BdCorr}.

Since $w \in \mathbb{R}^{n} \setminus \mathcal{N}_{f_i}^-$ and this set is open in $\mathbb{R}^{n}$, it follows that $\tilde{w}(m) \in \mathbb{R}^{n} \setminus \mathcal{N}_{f_{i}}^-$ for all $m \gg 1$. Since $\N(f_i)_{\tilde{w}(m)} = F$ for all $m \in \mathbb{N}$, it follows that $\tilde{w}(m) \in \mathbb{R}^{n} \setminus \mathcal{N}_{f_{i}}^-$ for all $m \in \mathbb{N}$ and the face $F$ does not contain any negative exponent vector of $f_i$. This implies:
\begin{align}
\label{Eq_LemmaProofLC}
f_{i|N(f_{i})_{\tilde{w}(m)}}(1,\dots,1) = \sum_{\mu \in \sigma(f_{i}) \cap F} c_{\mu} = \sum_{\mu \in \sigma_+(f_{i}) \cap F} c_{\mu} > 0,
\end{align}
where $c_\mu$ for $\mu \in \sigma(f_{i})$ are the coefficients of $f_{i}$. Thus, by Lemma \ref{Lemma_Pushing} for each $\tilde{w}(m)$ there exists $T(m) > 0$ such that 
\[ f_{i}(t^{\tilde{w}(m)}) > 0, \quad \text{ for all } \quad t > T(m).\]

In the following, we show that $T(m)$ can be chosen independently from $m$. The leading coefficient of $f_{i}(t^{\tilde{w}(m)})$ is as given in \eqref{Eq_LemmaProofLC}, which is positive, so there exists $\epsilon > 0$ such that $f_{i|N(f_{i})_{\tilde{w}(m)}}(1,\dots,1) - \epsilon > 0$. Since $w \in \mathbb{R}^n \setminus \mathcal{N}_{f_{i}}^-$, all the exponent vectors on the face $\N(f_{i})_w$ are positive, i.e.
\[ c_\mu > 0, \qquad \text{for all } \mu \in \N(f_{i})_w \cap \sigma(f_{i}) .\]
Thus, the expression
\[ \big(f_{i|N(f_{i})_{\tilde{w}(m)}}(1,\dots,1) - \epsilon \big) t^{d} +\sum_{\mu \in (\N(f_{i})_w \setminus F) \cap \sigma(f_{i})}c_{\mu} t^{\mu \cdot \tilde{w}(m)},\]
where $d = \max_{\mu \in \N(f_{i})} \tilde{w}(m) \cdot \mu$, is positive for all $t>0$ and $m$. 

Note that there exists $\delta >0$ such that
\[w \cdot \mu - w \cdot \nu > \delta \qquad \text{for all } \quad \mu \in \sigma(f_{i}) \cap \N(f_{i})_w, \quad \nu \in \sigma(f_{i}) \setminus \N(f_{i})_w .\]
Since $\tilde{w}(m) \to w$ and the scalar product is continuous,
\[\tilde{w}(m) \cdot \mu - \tilde{w}(m) \cdot \nu > \delta \qquad \text{for all } \quad \mu \in \sigma(f_{i}) \cap \N(f_{i})_w, \quad \nu \in \sigma(f_{i}) \setminus \N(f_{i})_w \]
holds for $m$  large enough. By passing to a subsequence if necessary, we may assume that the above inequality holds for all $\tilde{w}(m)$. Lemma \ref{Lemma_RootLimitNew} gives a bound $T \geq 1$ on the positive roots of $f_{i}(t^{\tilde{w}(m)})$ that depends only on $\epsilon, \delta$ and $c_\mu$, $\mu \in \sigma(f_{i})$. In particular, this bound is independent of $m$. Since the leading coefficient of $f_i(t^{\tilde{w}(m)})$ is positive for all $m \in \mathbb{N}$ by \eqref{Eq_LemmaProofLC}, we conclude that
\[ f_i(t^{\tilde{w}(m)}) > 0.\]
for all $m \in \mathbb{N}$ and $t > T$.
\end{proof}

\begin{remark}
Some of the inclusions in Theorem \ref{Thm_Inclusions} can be generalized to semi-algebraic sets over a real closed field $\mathcal{R}$ with a compatible non-trivial non-Archimedean valuation $\val\colon \mathcal{R}^* \to \mathbb{R}$, for instance the field of real Puiseux series $\mathbb{R}\{\!\{t \}\!\}$.

Let $f_1, \dots , f_k \in \mathcal{R}[x_1, \dots , x_n]$ be polynomials and consider the semi-algebraic set
\[ S_\mathcal{R}(f_1, \dots , f_k) = \{ z \in \mathcal{R}_{>0}^n \mid f_1(z) < 0, \dots , f_k(z) <0 \}.\] 
Here, $>$ denotes the unique ordering on the real closed field $\mathcal{R}$.
The real tropicalization of a semi-algebraic set is defined as
\[ \Trop\big(  S_\mathcal{R}(f_1, \dots , f_k)  \big) := \overline{ \big\{ (-\val(z), \dots , -\val(z)) \mid z \in S_{\mathcal{R}} (f_1, \dots , f_k)\big\}},\]
where the closure is taken in the Euclidean topology of $\mathbb{R}^{n}$.

This construction generalizes the definition of  real tropicalization as a logarithmic limit in the following sense. If $\mathcal{R}$ is a non-Archimedean real closed field of rank one extending $\mathbb{R}$ (e.g. $\mathcal{R} = \mathbb{R}\{\!\{t \}\!\}$) and the coefficients of $f_1, \dots , f_k \in \mathcal{R}[x_1, \dots , x_n]$  are real numbers, then by \cite[Corollary 4.6]{Alessandrini2007LogarithmicLS} we have,
\[ \Trop\big( S(f_1, \dots , f_k)  \big) = \Trop\big(  S_\mathcal{R}(f_1, \dots , f_k)  \big),\]
where $S(f_1, \dots , f_k)$ is defined as in \eqref{Eq_Sf} and $\Trop(S(f_1, \dots , f_k))$ denotes the logarithmic limit of $S(f_1, \dots , f_k)$ as introduced in \eqref{Eq::DefOfLogLimit}.

By replacing the negative normal cones $\mathcal{N}_{f_i}^-$ with a  ``signed part'' of the tropical hypersurface  defined by $f_i$ (see \cite[Section 5.2]{RealTrop_SemiAlgSet} or \cite[Section 2.1]{RealTrop2022} for a precise definition), the inclusions in \eqref{Eq_InclPoly} remain true \cite[Proposition 6.12]{RealTrop_SemiAlgSet}.

To the best of our knowledge, there is no known generalization of the actual negative normal cone $\Sigma(f_1, \dots, f_k)$ in the non-trivial valuation case, such that the generalized objects would satisfy similar inclusions as in Theorem \ref{Thm_Inclusions}. The techniques used in the current paper do not allow immediately to find such a generalization. However, it is an interesting problem that might be addressed using alternative approaches.
\end{remark}

\begin{cor}
\label{Cor_MaxSparse}
For a maximally sparse signomial $f$, it holds:
\[ \Trop(S(f)) = \mathcal{N}_{f}^-.\]
\end{cor}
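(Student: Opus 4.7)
The plan is to combine Theorem \ref{Thm_Inclusions} with Proposition \ref{Lemma_CommonNegVertex} by showing that maximal sparsity forces $\mathcal{N}_f^-$ to be regular, i.e. equal to the closure of its interior. Once this is established, since $\Trop(S(f))$ is closed by Proposition \ref{Prop_Alessandrini}(i), the chain of inclusions in Theorem \ref{Thm_Inclusions} (specialized to $k=1$) collapses to equalities:
\[ \mathcal{N}_f^- = \overline{\inte(\mathcal{N}_f^-)} \subseteq \overline{\Sigma(f)} \subseteq \overline{\Trop(S(f))} = \Trop(S(f)) \subseteq \mathcal{N}_f^-. \]

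So the core step is to show regularity. Let $w \in \mathcal{N}_f^-$, which by definition means the face $\N(f)_w$ contains some negative exponent vector $\mu \in \sigma_-(f)$. Because $f$ is maximally sparse, every element of $\sigma(f)$ is already a vertex of $\N(f)$; in particular $\mu$ is a vertex of $\N(f)$, and since $\mu$ lies in the face $\N(f)_w$, it is automatically a vertex of $\N(f)_w$. Thus $\N(f)_w$ possesses a negative vertex, namely $\mu$. Picking any $v$ in the relative interior of the normal cone $\mathcal{N}_f(\{\mu\})$, equation \eqref{Eq_BdCorr} gives $\N(f)_v = \{\mu\}$, and so $\N(f)_v$ is a negative vertex of $\N(f)_w$. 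Proposition \ref{Lemma_CommonNegVertex} (applied with $k=1$) then places $w$ in the closure of $\inte(\mathcal{N}_f^-)$, proving $\mathcal{N}_f^- \subseteq \overline{\inte(\mathcal{N}_f^-)}$; the reverse inclusion holds since $\mathcal{N}_f^-$ is closed (as noted in the paragraph preceding Proposition \ref{Prop_AcNegCone_closed}).

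I don't expect any real obstacle here: the argument is essentially bookkeeping once one notices that the defining property of maximal sparsity ($\sigma(f) = \Vertex(\N(f))$) directly supplies the negative vertex needed to invoke Proposition \ref{Lemma_CommonNegVertex}. The only subtle point is remembering that a vertex of the ambient Newton polytope that happens to lie on a face $\N(f)_w$ is still a vertex of that face, which is a standard fact from polyhedral geometry.
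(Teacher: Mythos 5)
Your proof is correct and takes essentially the same route as the paper: both observe that maximal sparsity forces every negative face to contain a negative vertex, then invoke Proposition~\ref{Lemma_CommonNegVertex} to deduce regularity of $\mathcal{N}_f^-$, and finally close the chain from Theorem~\ref{Thm_Inclusions} using the closedness of $\Trop(S(f))$. You simply spell out the intermediate bookkeeping (producing the vector $v$ via the normal cone of the vertex $\{\mu\}$) that the paper leaves implicit.
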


\begin{proof}
If all the exponent vectors of $f$ are vertices of $\N(f)$, then every negative face of $\N(f)$ must contain a negative vertex. Thus, $\mathcal{N}_f^-$ equals the closure of its interior by Proposition \ref{Lemma_CommonNegVertex}. Using that $\Trop(S(f))$ is closed and Theorem \ref{Thm_Inclusions}, we conclude that $\Trop(S(f)) = \mathcal{N}_f^-$.
\end{proof}

For a signomial $f$, it might be worth to know when $S(f)$ is bounded from the coordinate planes in $\mathbb{R}^{n}$ and from infinity. Note that this happens if and only if $\Log_t(S(f))$ is bounded for $t >0$, as the map $\Log_t \colon \mathbb{R}^{n}_{>0} \to \mathbb{R}$ is a homeomorphism.

\begin{cor}
\label{Cor_BoundedNegPre}
Let $f$ be a signomial on $\mathbb{R}^{n}_{>0}$. If $\sigma_-(f) \subseteq \inte(\N(f))$, then $\Log_t(S(f))$ is bounded for all $t >0$.
\end{cor}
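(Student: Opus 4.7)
The plan is to combine Theorem \ref{Thm_Inclusions} with Proposition \ref{Prop_Alessandrini}(iii) to reduce the statement to a purely combinatorial claim about the negative normal cone $\mathcal{N}_f^-$. Concretely, Proposition \ref{Prop_Alessandrini}(iii) says that $\Log_t(S(f))$ is bounded for every $t>0$ if and only if $\Trop(S(f)) \subseteq \{0\}$, and the third inclusion of Theorem \ref{Thm_Inclusions} gives $\Trop(S(f)) \subseteq \mathcal{N}_f^-$. Therefore, it is enough to prove that the hypothesis $\sigma_-(f) \subseteq \inte(\N(f))$ forces $\mathcal{N}_f^- \subseteq \{0\}$.

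To establish $\mathcal{N}_f^- \subseteq \{0\}$, I would first dispose of the trivial case $\sigma_-(f) = \emptyset$, in which $f > 0$ everywhere on $\mathbb{R}^n_{>0}$, so $S(f) = \emptyset$ and there is nothing to prove. Assuming $\sigma_-(f) \neq \emptyset$, the assumption $\sigma_-(f) \subseteq \inte(\N(f))$ implies that $\inte(\N(f))$ is non-empty, hence $\N(f)$ is a full-dimensional polytope in $\mathbb{R}^n$.

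The key combinatorial step is the identification of the negative faces. Every proper face $F \subsetneq \N(f)$ is contained in the topological boundary $\bd(\N(f))$, so the hypothesis gives $F \cap \sigma_-(f) = \emptyset$; that is, no proper face of $\N(f)$ is negative. Consequently, $\N(f)$ itself is the unique negative face, and by the definition of $\mathcal{N}_f^-$ we obtain $\mathcal{N}_f^- = \mathcal{N}_f(\N(f))$. Since $\N(f)$ is $n$-dimensional, relation \eqref{Eq_DimCorr} forces $\dim \mathcal{N}_f(\N(f)) = 0$, so $\mathcal{N}_f(\N(f)) = \{0\}$, completing the proof.

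There is no real obstacle here; the one mild subtlety is recognizing that the assumption $\sigma_-(f) \subseteq \inte(\N(f))$ is what guarantees full-dimensionality of $\N(f)$, which is in turn what collapses the normal cone of the top-dimensional face to the origin. The entire argument is a short combination of Theorem \ref{Thm_Inclusions}, the face-cone dimension formula \eqref{Eq_DimCorr}, and Proposition \ref{Prop_Alessandrini}(iii).
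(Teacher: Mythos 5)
Your proposal is correct and follows essentially the same route as the paper: the paper also observes that the hypothesis forces $\mathcal{N}_f^- \subseteq \{0\}$, then combines the third inclusion of Theorem \ref{Thm_Inclusions} with Proposition \ref{Prop_Alessandrini}(iii). You have merely spelled out the (brief) combinatorial justification of $\mathcal{N}_f^- \subseteq \{0\}$ that the paper leaves implicit, including the full-dimensionality observation and the use of \eqref{Eq_DimCorr}.
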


\begin{proof}
If the boundary of $\N(f)$ does not contain negative monomials, then $\mathcal{N}_{f}^{-} \subseteq \{0\}$. Theorem \ref{Thm_Inclusions} implies that $\Trop(S(f)) \subseteq  \{0\}$.  Now, the statement follows from Proposition  \ref{Prop_Alessandrini}(iii).
\end{proof}

\begin{ex}
\label{Ex_boundedSet}
The boundary of the Newton polytope of $f = x_1^{9} x_2^{6} + x_1^{6} x_2^{9} - x_1^{7} x_2^{7} - 4 x_1^{7} x_2^{6} + 5  x_1^{5} x_2 + 5  x_1 x_2^{5} - 5  x_1 x_2 + 1$ does not contain any negative exponent vector of $f$. By Corollary \ref{Cor_BoundedNegPre}, $\Log_t\big( S(f) \big)$ is bounded for all $t >0$. For an illustration, we refer to Figure \ref{FIG2}.
\end{ex}

\begin{figure}[t]
\centering
\begin{minipage}[h]{0.3\textwidth}
\centering
\includegraphics[scale=0.4]{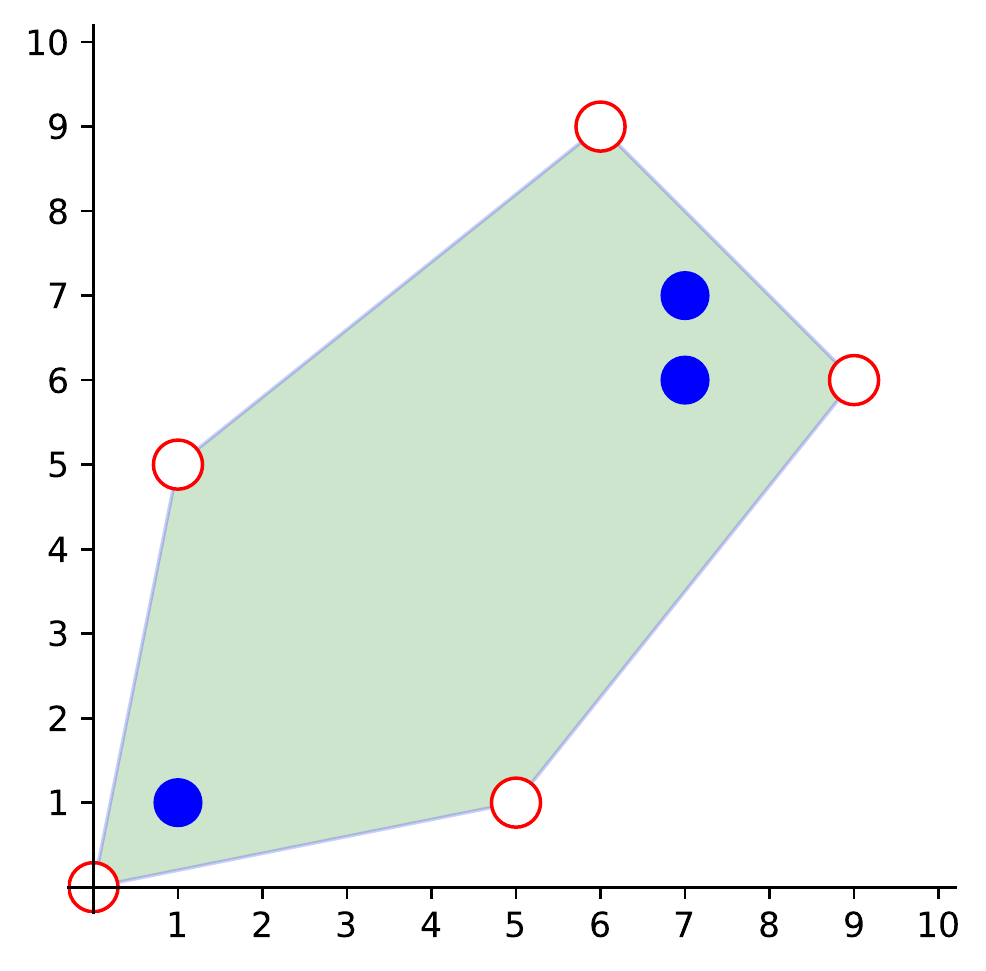}

{\small (a) $\N(f)$}
\end{minipage}
\begin{minipage}[h]{0.3\textwidth}
\centering
\includegraphics[scale=0.4]{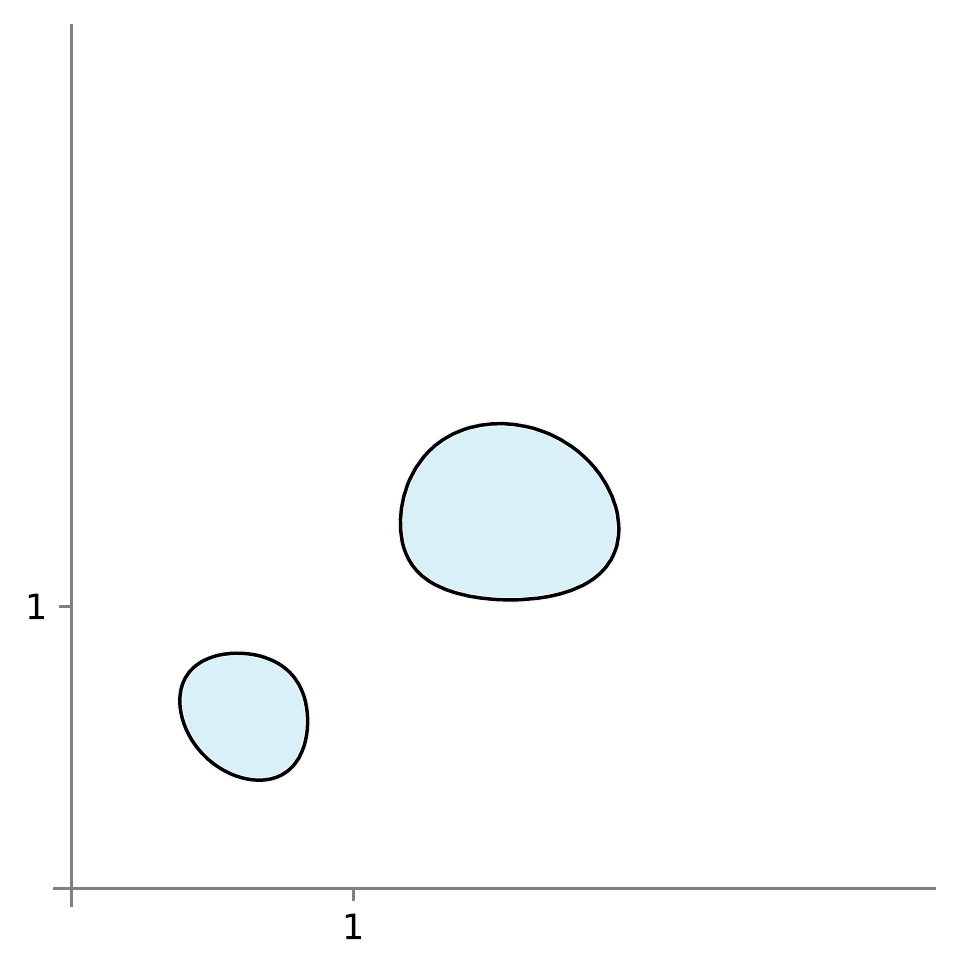}

{\small (b) $S(f)$}
\end{minipage}

\begin{minipage}[h]{0.3\textwidth}
\centering
\includegraphics[scale=0.4]{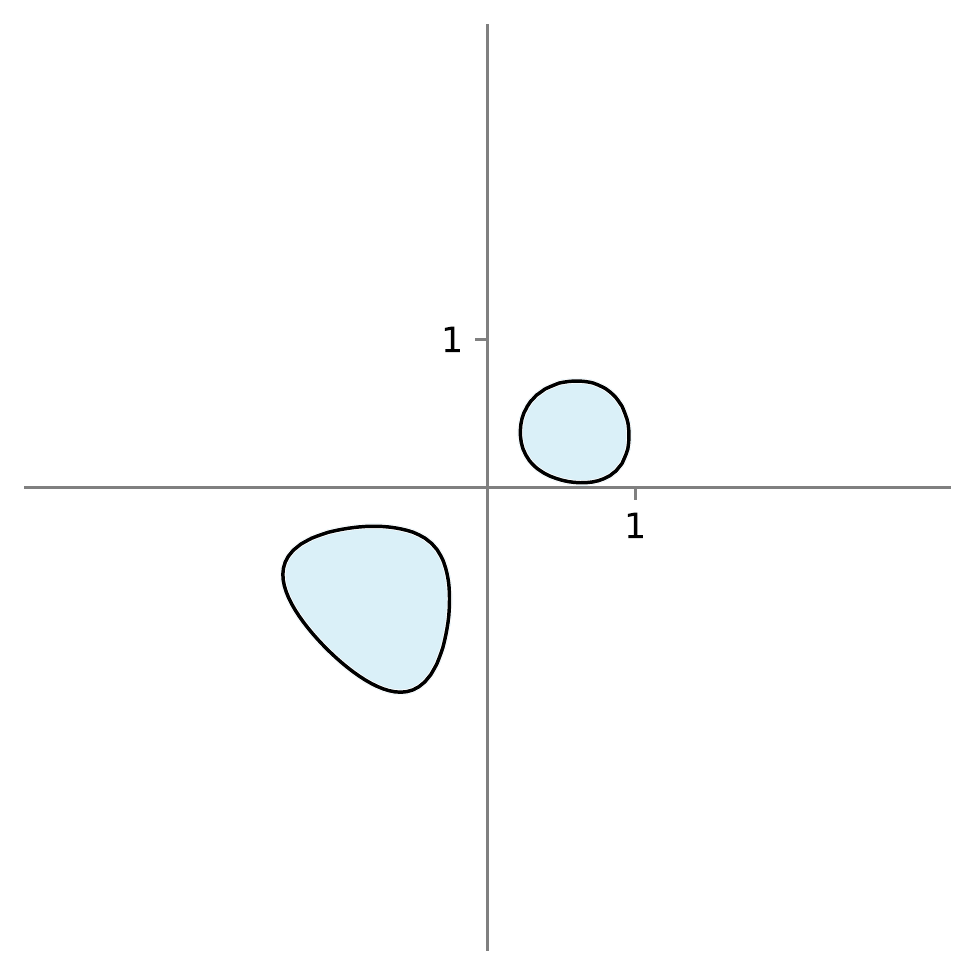}

{\small (c)  $\Log_2\big(S(f)\big)$}
\end{minipage}
\begin{minipage}[h]{0.3\textwidth}
\centering
\includegraphics[scale=0.4]{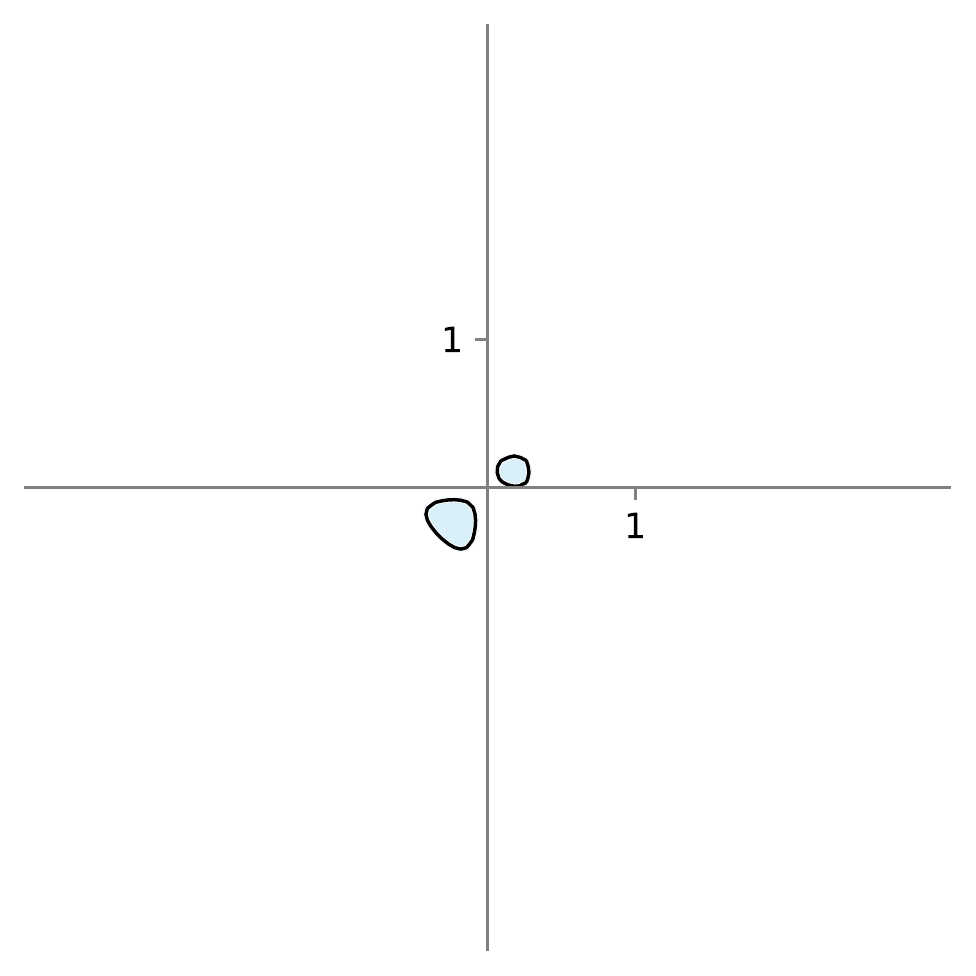}

{\small (d) $\Log_{10}\big(S(f)\big)$}
\end{minipage}
\caption{{\small (a) The Newton polytope of $f = x_1^{9} x_2^{6} + x_1^{6} x_2^{9} - x_1^{7} x_2^{7} - 4 x_1^{7} x_2^{6} + 5  x_1^{5} x_2 + 5  x_1 x_2^{5} - 5  x_1 x_2 + 1$ from Example \ref{Ex_boundedSet} (b) The set $S(f) = f^{-1}(\mathbb{R}_{<0})$ is bounded. (c)(d) Logarithmic images of $S(f)$ for $t=2$ and $t = 10$.}}\label{FIG2}
\end{figure}

\medskip

To conclude, we show that for a semi-algebraic set of the form $S(f)$ the second inclusion in Theorem \ref{Thm_Inclusions} is an equality if the coefficients of $f$ are generic. First, we clarify what we mean by generic coefficients. For a fixed finite set $A \subseteq \mathbb{R}^{n}$, we consider
the space of signomials whose support is contained in $A$:
\[ \mathbb{R}^{A} = \big\{ f = \sum_{\mu \in \sigma(f)} c_\mu x^{\mu} \mid \sigma(f) \subseteq A \big\}.\]
Interpreting the coefficients of signomials as vectors, one has an isomorphism $\mathbb{R}^{A} \cong \mathbb{R}^{\# A}$.

The set of nonnegative signomials in $\mathbb{R}^{A}$,
\[\mathcal{P}^{+}_{A} = \big\{ f \in \mathbb{R}^{A} \mid f(x) \geq 0 \quad \text{for all } x \in \mathbb{R}^{n}_{>0} \big\},\]
is a full-dimensional closed convex cone, called the \emph{nonnegativity cone} \cite{SoncBoundary, NonNegCone}. Using this terminology, the genericity condition we will assume is that for all faces $F \subseteq \N(f)$, the signomial $f_{|F}$ does not lie on the boundary of the nonnegativity cone, i.e.:
\begin{align}\tag{NB}
\label{Eq_NB}
 f_{|F} \in \mathcal{P}^{+}_{\sigma(f) \cap F}   \quad \Longrightarrow \quad f_{|F} \in \inte(\mathcal{P}^{+}_{\sigma(f) \cap F}  ).
\end{align}
As a consequence of the non-boundary assumption \eqref{Eq_NB}, we will be able to perturb the coefficients of a nonnegative signomial while preserving nonnegativity.

\begin{remark}
If a signomial, which is nonnegative over the positive real orthant, has a positive real root, then this root is degenerate and the signomial lies on the boundary of the nonnegativity cone. However, there exist signomials contained in the boundary of the nonnegativity cone that do not have any positive real roots as the following well-known example shows.

The signomial $g = x_1^2x_2^2 -2x_1x_2 + 1+x_1^2 = (x_1x_2-1)^2 + x_1^2$ is positive for all $(x_1,x_2) \in \mathbb{R}^{2}_{>0}$, but for any $\varepsilon >0$ the signomial $g_\varepsilon = x_1^2x_2^2 -(2+\varepsilon)x_1x_2 + 1+x_1^2$ takes negative values in $\mathbb{R}^2_{>0}$, e.g. $g_\varepsilon(\tfrac{\sqrt{\varepsilon}}{2},\tfrac{2}{\sqrt{\varepsilon}}) = \tfrac{3}{4} \varepsilon <0$. Thus, even if a nonnegative signomial does not have any roots, i.e. it is strictly positive, we might not be able to perturb its coefficients while preserving nonnegativity.
\end{remark}

\begin{thm}
\label{Lemma_SigmaTrop}
Let $f_1, \dots , f_k$ be signomials on $\mathbb{R}^{n}_{>0}$ such that condition \eqref{Eq_NB} holds for all $f_i$ and all faces $F \subseteq \N(f_i)$, $\, i=1, \dots ,k$. Then:
\[\Trop\big(S(f_1, \dots ,f_k)\big) \subseteq \bigcap_{i=1}^{k}\Sigma(f_i) .\]
\end{thm}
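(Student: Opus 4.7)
The plan is a proof by contradiction exploiting the interior condition \eqref{Eq_NB} to force a uniform Cauchy-type bound (via Lemma \ref{Lemma_RootLimitNew}) on the positive roots of the univariate signomials $t \mapsto f_i(t^{w(m)})$ along a sequence $w(m) \to w$ certifying $w \in \Trop(S(f_1, \dots, f_k))$. Let $w \in \Trop(S(f_1, \dots, f_k))$. By Proposition \ref{Prop_Alessandrini}(ii) choose $\{x(m)\}_m \subseteq S(f_1, \dots, f_k)$ and $t(m) \to \infty$ with $w(m) := \Log_{t(m)}(x(m)) \to w$, so that $x(m) = t(m)^{w(m)}$ and $f_i(x(m)) < 0$ for every $i$ and $m$. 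Fix $i$ and assume, for contradiction, that $w \notin \Sigma(f_i)$; then $f_{i|\N(f_i)_w} \in \mathcal{P}^{+}_{\sigma(f_i) \cap \N(f_i)_w}$, and \eqref{Eq_NB} upgrades this to membership in the interior of the nonnegativity cone.

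Since $\N(f_i)$ has finitely many faces, I pass to a subsequence such that $F := \N(f_i)_{w(m)}$ is the same face for every $m$; closedness of $\mathcal{N}_{f_i}(F)$ together with $w(m) \to w$ then forces $F \subseteq \N(f_i)_w$. Pick any vertex $\mu_* \in F \cap \sigma(f_i)$ (such a vertex exists because vertices of $F$ are vertices of $\N(f_i)$, hence elements of $\sigma(f_i)$). The interior property yields $\epsilon > 0$ with $f_{i|\N(f_i)_w}(y) - \epsilon y^{\mu_*} \geq 0$ for all $y \in \mathbb{R}^{n}_{>0}$. Crucially, because $\mu_* \in F = \N(f_i)_{w(m)}$, the exponent $w(m) \cdot \mu_*$ equals the top value $\nu_d(m) := \max_{\mu \in \sigma(f_i)} w(m) \cdot \mu$, so substituting $y = t^{w(m)}$ gives
\[
f_{i|\N(f_i)_w}(t^{w(m)}) \geq \epsilon\, t^{\nu_d(m)} \qquad \text{for every } t > 0 \text{ and every } m.
\]
Dividing by $t^{\nu_d(m)}$ and letting $t \to \infty$ extracts in particular $f_{i|F}(1, \dots, 1) \geq \epsilon > 0$.

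Because $w \cdot \mu < w \cdot \mu_*$ for each $\mu \in \sigma(f_i) \setminus \N(f_i)_w$ and the scalar product is continuous, after a further passage to a subsequence there is a uniform gap $\delta > 0$ such that $w(m) \cdot \mu_* - w(m) \cdot \mu > \delta$ for every such $\mu$ and every $m$. Writing
\[
f_i(t^{w(m)}) = f_{i|\N(f_i)_w}(t^{w(m)}) + \sum_{\mu \in \sigma(f_i) \setminus \N(f_i)_w} c_\mu\, t^{w(m) \cdot \mu},
\]
the displayed inequality (with $\epsilon$ replaced by $\epsilon/2$) verifies the hypothesis of Lemma \ref{Lemma_RootLimitNew}: if $p$ is chosen so that $\nu_p, \dots, \nu_d$ are exactly the distinct values of $w(m) \cdot \mu$ for $\mu \in \sigma(f_i) \cap \N(f_i)_w$, then $(a_d - \epsilon/2)\, t^{\nu_d} + \sum_{j=p}^{d-1} a_j\, t^{\nu_j} = f_{i|\N(f_i)_w}(t^{w(m)}) - (\epsilon/2)\, t^{\nu_d(m)} \geq (\epsilon/2)\, t^{\nu_d(m)} > 0$, and $\nu_p - \nu_{p-1} > \delta$. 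The lemma then produces a bound $T \geq 1$ on the positive roots of $t \mapsto f_i(t^{w(m)})$ that depends only on $\epsilon$, $\delta$ and the coefficients $c_\mu$ of $f_i$, hence is independent of $m$. Combined with the positivity of the leading coefficient $f_{i|F}(1, \dots, 1)$, this gives $f_i(t^{w(m)}) > 0$ whenever $t > T$. Since $t(m) \to \infty$, eventually $t(m) > T$ and so $f_i(x(m)) = f_i(t(m)^{w(m)}) > 0$, contradicting $x(m) \in f_i^{-1}(\mathbb{R}_{<0})$.

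The hard part is translating the interior condition \eqref{Eq_NB}, which lives on $f_{i|\N(f_i)_w}$, into uniform quantitative control of the univariate signomials $f_i(t^{w(m)})$, whose top degree is governed by the potentially strictly smaller face $F = \N(f_i)_{w(m)} \subseteq \N(f_i)_w$. The resolution is to pick the witness monomial $\mu_*$ inside $F$ itself, so that the monomial lower bound $f_{i|\N(f_i)_w}(y) \geq \epsilon\, y^{\mu_*}$ produced by the interior property turns, after the substitution $y = t^{w(m)}$, into a lower bound of precisely the leading order needed by Lemma \ref{Lemma_RootLimitNew}.
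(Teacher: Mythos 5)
Your argument tracks the paper's proof very closely: both proceed by contradiction from $w \in \Trop(S(f_1, \ldots, f_k))$ with $w \notin \Sigma(f_i)$, pass to a subsequence along which $F := \N(f_i)_{w(m)}$ is a fixed face contained in $G := \N(f_i)_w$, use condition \eqref{Eq_NB} to place $f_{i|G}$ in the interior of the nonnegativity cone and extract an $\epsilon$-robust lower bound, feed that bound into Lemma~\ref{Lemma_RootLimitNew} to get an $m$-independent root bound $T$, and then contradict $f_i(x(m)) < 0$ once $t(m) > T$. The only cosmetic difference is the perturbation direction: you subtract $\epsilon\, y^{\mu_*}$ for a single vertex $\mu_* \in F$, while the paper subtracts $\tilde\epsilon$ from every coefficient $c_\mu$ with $\mu \in F \cap \sigma(f_i)$; both are legitimate uses of interiority and lead to the same leading-term estimate.

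There is, however, one slip in the gap argument. You introduce $\delta > 0$ so that $w(m)\cdot\mu_* - w(m)\cdot\mu > \delta$ for all $\mu \in \sigma(f_i)\setminus G$, which controls $\nu_d - \nu_{p-1}$, and then assert $\nu_p - \nu_{p-1} > \delta$, which is the quantity the hypothesis of Lemma~\ref{Lemma_RootLimitNew} actually requires. When $G \supsetneq F$ one has $\nu_p = \min_{\mu \in G\cap\sigma(f_i)} w(m)\cdot\mu < \nu_d$, so the inequality $\nu_p - \nu_{p-1} > \delta$ does not follow from what you established. The correction is exactly the paper's \eqref{Eq_SigmaDelta}: pick $\delta$ so that $w\cdot\mu - \delta > w\cdot\nu$ for \emph{all} $\mu \in G \cap \sigma(f_i)$ and $\nu \in \sigma(f_i)\setminus G$, then carry this over to $w(m)$ by continuity. (Alternatively, one can observe that the proof of Lemma~\ref{Lemma_RootLimitNew} only uses $\nu_d - \delta \geq \nu_{p-1}$, so your $\delta$ would suffice if you re-derived the root bound rather than invoking the lemma's statement verbatim; but as written the application of the lemma is not justified.)
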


\begin{proof}
We use a similar argument to the proof of Theorem \ref{Thm_Inclusions}. Let $w \in \Trop(S)$. By Proposition \ref{Prop_Alessandrini}, there exist sequences $\{x(m) \}_{m \in \mathbb{N}} \subseteq S$, and $\{ t(m) \}_{m \in \mathbb{N}} \subseteq \mathbb{R}_{>0}$ such that $t(m) \to \infty$ and $w(m) := \Log_{t(m)}(x(m)) \to w$.

Assume that $ w\notin\bigcap_{j=1}^{k}\Sigma(f_j)$, and let $i \in \{1, \dots , k\}$ such that $w \notin \Sigma(f_{i})$. In the following, we write $c_\mu$, $\mu \in \sigma(f_{i})$ for the coefficients of $f_{i}$ and $G := \N(f_{i})_w$ for the face of $\N(f_{i})$ which is cut out by $w$. 
By an argument as in the proof of Theorem \ref{Thm_Inclusions}, we may pass to a subsequence if necessary and assume that there exists a face $F \subseteq G$ of $\N(f_{i})$ such that $F = \N(f_{i})_{w(m)}$ for all $m \in \mathbb{N}$. 

Since $w$ cuts out the face $G$, there exists $\delta > 0$ such that
\begin{align}
\label{Eq_SigmaDelta}
w \cdot \mu - \delta > w \cdot \nu, \qquad \text{for all } \mu \in G \cap \sigma(f_{i}), \quad \nu \in \sigma(f_{i}) \setminus G.
\end{align}
By continuity of the scalar product, \eqref{Eq_SigmaDelta} holds also for $w(m)$ for large enough $m$. Thus, we can pass to a subsequence again to assume that \eqref{Eq_SigmaDelta} holds for all $w(m)$.

Since $w \notin \Sigma(f_{i})$, the signomial $f_{i|G}$ is in the nonnegativity cone $\mathcal{P}^+_{\sigma(f_i) \cap F}$. By the condition \eqref{Eq_NB} $f_{i|G}$ is in the interior and hence there exists $\tilde{\epsilon} >0$ such that
\begin{align}
\label{Eq_FPositive}
 \sum_{\mu \in F \cap \sigma(f_{i})} (c_\mu - \tilde{\epsilon}) x^{\mu} + \sum_{\mu \in (G\setminus F) \cap \sigma(f_{i})} c_\mu  x^{\mu} > 0 \quad  \text{ for all } \quad x \in \mathbb{R}^{n}_{>0}.
\end{align}
In particular, it holds:
\begin{align}
\label{Eq_FPositive}
 \sum_{\mu \in F \cap \sigma(f_{i})} (c_\mu - \tilde{\epsilon}) t^{w(m) \cdot \mu} + \sum_{\mu \in (G\setminus F) \cap \sigma(f_{i})} c_\mu  t^{w(m) \cdot \mu} > 0 \quad  \text{ for all } t \in \mathbb{R}_{>0} \text{ and } m \in \mathbb{N}.
\end{align}
From \eqref{Eq_SigmaDelta} and \eqref{Eq_FPositive} follows that for each $m \in \mathbb{N}$ the univariate signomial 
\[f(t^{w(m)}) =  \Big( \sum_{\mu \in F \cap \sigma(f_{i})} c_\mu  \Big) t^{d} + \sum_{\mu \in (G\setminus F) \cap \sigma(f_{i})} c_\mu  t^{w(m) \cdot \mu} + \sum_{\mu \in \sigma(f_{i}) \setminus G} c_\mu  t^{w(m) \cdot \mu}, \]
where $d = \max_{\mu \in \sigma(f_{i})} w(m) \cdot \mu$, satisfies the conditions in Lemma \ref{Lemma_RootLimitNew}  with  $\epsilon = \sum_{\mu \in F \cap \sigma(f_{i})} \tilde{\epsilon} >0$ and $\delta$ as in  \eqref{Eq_SigmaDelta}. Thus, there exists $T > 1$ which is independent of $m$ such that
\[f_{i}(t^{w(m)}) > 0, \qquad \text{ for all } t > T.\]

Since $t(m) \to \infty$, $t(m) > T$ for $m \gg 1$. Thus,
\[ 0 <  f_{i}(t(m)^{w(m)}) = f_{i}(x(m)) <0.\]
which is a contradiction. The last inequality holds as $x(m) \in S(f_1, \dots , f_k)$.
\end{proof}

\begin{cor}
\label{Cor_RealTropOnePoly}
Let $f$ be a signomial on $\mathbb{R}^{n}_{>0}$. If all faces $F \subseteq \N(f)$ satisfy \eqref{Eq_NB} then:
\[\Trop\big( S(f) \big) = \Sigma(f).\]
\end{cor}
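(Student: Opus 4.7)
The plan is to observe that Corollary \ref{Cor_RealTropOnePoly} is an immediate consequence of combining Theorem \ref{Thm_Inclusions} and Theorem \ref{Lemma_SigmaTrop} specialized to the case $k=1$. No new ideas beyond those two theorems are needed; the content of the corollary is merely that under the non-boundary hypothesis the chain of inclusions collapses to an equality.

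First, I would invoke Theorem \ref{Thm_Inclusions} with the single signomial $f$. Taking $k=1$ in that theorem yields
\[ \Sigma(f) \subseteq \Trop\big(S(f)\big), \]
which is one of the two inclusions we need, and this direction requires no genericity assumption whatsoever.

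Next, I would invoke Theorem \ref{Lemma_SigmaTrop} with $k=1$. Its hypothesis is exactly that every face $F \subseteq \N(f)$ satisfies condition \eqref{Eq_NB}, which is precisely the hypothesis we are given in the corollary. The conclusion then reads
\[ \Trop\big(S(f)\big) \subseteq \Sigma(f), \]
which is the reverse inclusion. Chaining the two inclusions gives the desired equality $\Trop(S(f)) = \Sigma(f)$, and the proof is complete.

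Since both halves are direct applications of previously stated results, there is really no obstacle to overcome in writing this argument; the only subtlety worth flagging to the reader is that the \eqref{Eq_NB} hypothesis enters only through Theorem \ref{Lemma_SigmaTrop}, while the containment $\Sigma(f) \subseteq \Trop(S(f))$ is unconditional. The proof will consequently be very short, essentially a one-line citation of each of the two theorems.
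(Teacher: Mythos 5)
Your proposal is correct and is exactly the argument the paper gives: the paper's proof likewise cites Theorem~\ref{Lemma_SigmaTrop} for the inclusion $\Trop(S(f)) \subseteq \Sigma(f)$ and Theorem~\ref{Thm_Inclusions} for the reverse inclusion. Your added remark that condition~\eqref{Eq_NB} is needed only for the forward inclusion is also accurate.
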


\begin{proof}
By Theorem \ref{Lemma_SigmaTrop}, $\Trop\big(S(f) \big) \subseteq \Sigma(f)$. The reverse inclusion follows from Theorem \ref{Thm_Inclusions}.
\end{proof}

\begin{figure}[t]
\centering
\begin{minipage}[h]{0.3\textwidth}
\centering
\includegraphics[scale=0.4]{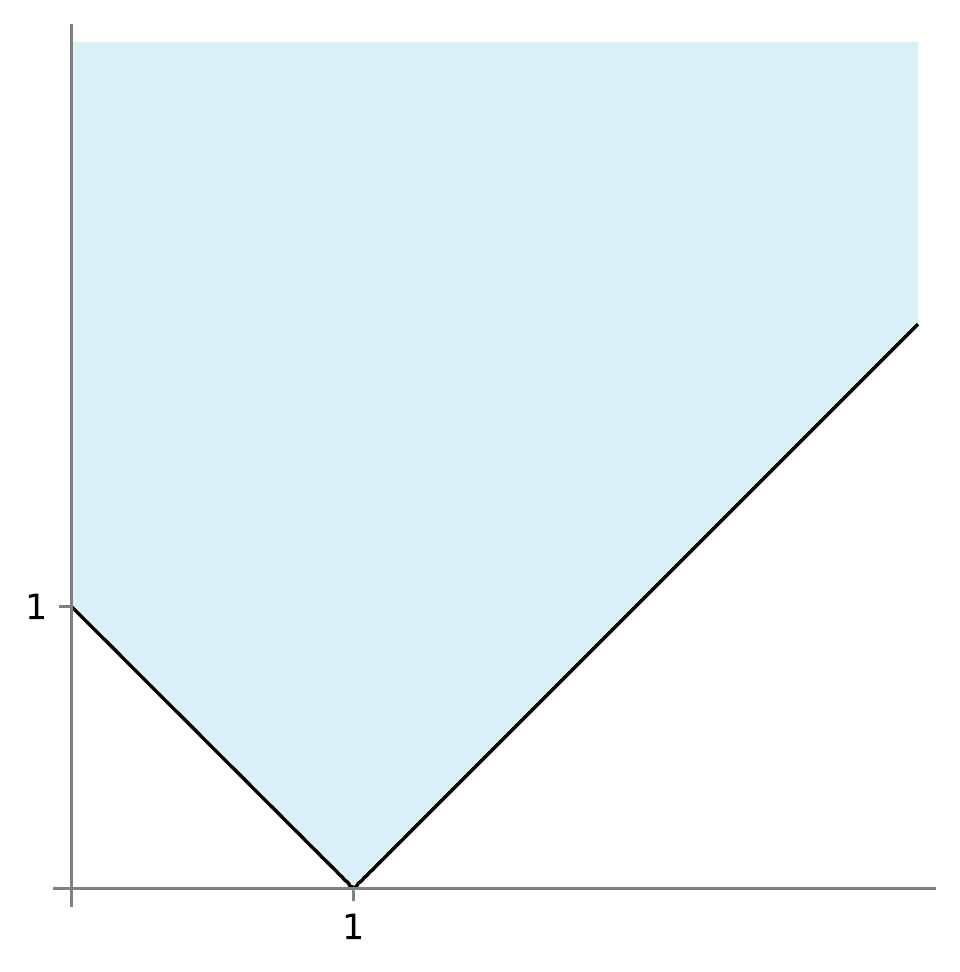}

{\small (a) $S(g)$ }
\end{minipage}
\begin{minipage}[h]{0.3\textwidth}
\centering
\includegraphics[scale=0.4]{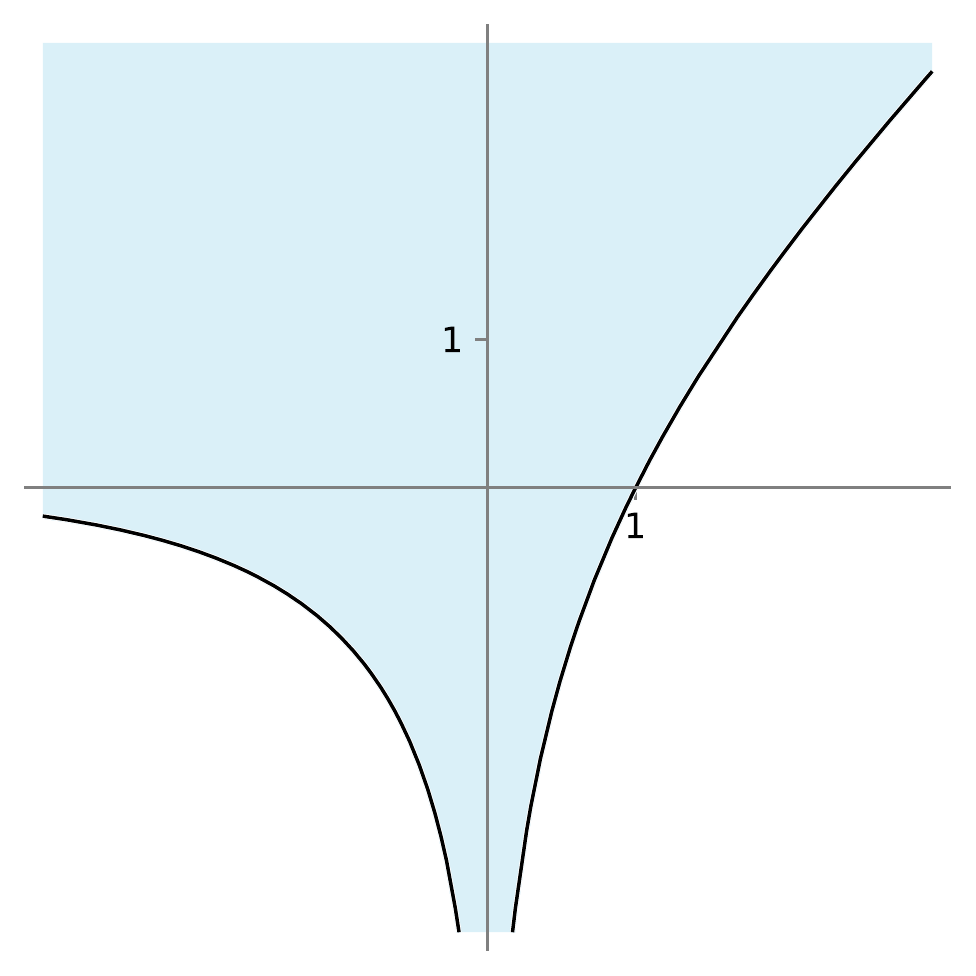}

{\small (b) $\Log_2\big( S(g) \big)$ }
\end{minipage}
\begin{minipage}[h]{0.3\textwidth}
\centering
\includegraphics[scale=0.4]{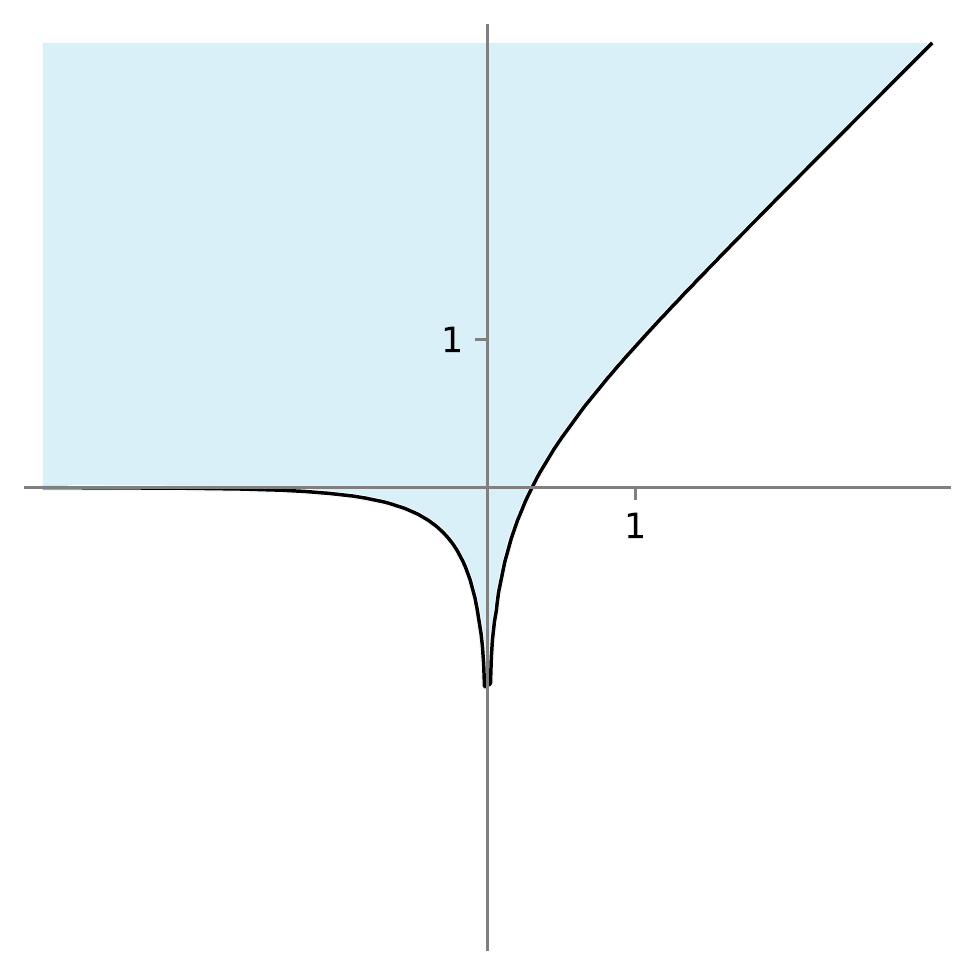}

{\small (c)  $\Log_{10}\big( S(g) \big)$}
\end{minipage}
\caption{{\small The semi-algebraic set $S(g)$ defined by $g = x_1^2-2x_1+1 - x_2^2$ (a), and its logarithmic images for $t=2$ (b) and $t=10$ (c).}}\label{FIG3}
\end{figure}

\begin{ex}
Consider the signomials $f =  x_1^2- x_1+1 - x_2^2$ from Figure~\ref{FIG1} and $g = x_1^2-2 x_1+1 - x_2^2$. Note that $f$ and $g$ have the same negative/positive exponent vectors and $\N(f) = \N(g)$. The only difference between $f$ and $g$ is the coefficient of $x_1$. Furthermore, we have:
\[ \Sigma(f) = \Sigma(g), \qquad \mathcal{N}^-_f = \mathcal{N}^-_g.\] 

For the faces $F_1 = \Conv ( (2,0) ,(0,2) )$,  $F_2 = \Conv ( (0,0) ,(0,2) )$ of $\N(f)$, the restrictions $f_{|F_i}$ and $g_{|F_i}$, $i=1,2$ take negative values by Lemma \ref{Lemma_Pushing}. Thus, $f_{|F_i}$ and $g_{|F_i}$, $i=1,2$ are not in the nonnegativity cone.

For the face $F = \Conv ( (0,0) ,(2,0) )$, $f_{|F} = x_1^2 - x_1 + 1$ is contained in the interior of the nonnegativity cone. So the coefficients of $f$ are generic in the sense of Corollary \ref{Cor_RealTropOnePoly}, thus:
\[ \Trop(S(f)) = \Sigma(f).\]
The sets $S(f), \Sigma(f)$ and $\Log_t(S(f))$ for $t=2,10$ are shown in Figure \ref{FIG1}. The negative normal cone $\mathcal{N}_f^-$  (see Figure\ref{FIG1}(c)) is strictly larger than $\Trop(S(f))$ as $\mathcal{N}_f^-$ has a ray in southern direction that $\Trop(S(f))$ does not have. This example illustrates that the negative normal cone might not coincide with the real tropicalization.

As $g_{|F}=  x_1^2 - 2x_1 + 1$ is nonnegative but has a positive real root, $g_{|F}$ lies on the boundary of the nonnegativity cone. Thus \eqref{Eq_NB} is not satisfied, and we cannot apply  Corollary \ref{Cor_RealTropOnePoly}. In fact, it holds that
\[ \Trop(S(g)) = \mathcal{N}_g^-,\]
 which is strictly larger than the actual negative normal cone $\Sigma(g)$, see Figure~\ref{FIG1} (b),(c). This illustrates that if the condition \eqref{Eq_NB} is not satisfied, Corollary \ref{Cor_RealTropOnePoly} might not hold. Logarithmic images of $S(g)$ with base $t=2,10$ are shown in Figure~\ref{FIG3}.
\end{ex}

\begin{ex}
\label{Ex::3d}
Consider the signomial 
\[f = x_2^2 - 2x_2 + 1 - 2 x_1x_2x_3 + x_1x_2x_3^2+x_1^2x_2.\] 
The Newton polytope of $f$ is shown in Figure \ref{FIG4}(a). The only negative exponent vector of $f$ that lies on the boundary of $\N(f)$ is $(0,1,0)$. This negative exponent vector is contained in the $1$-dimensional face $F = \Conv\big( (0,0,0) , (0,2,0) \big)$. The negative normal cone of $f$ is $2$-dimensional, and it is spanned by the vectors $(0,0,-1)$ and $(-1,0,2)$, see Figure\ref{FIG4}(b). Since $\mathcal{N}_f^-$ is not full dimensional, we have that $\inte(\mathcal{N}_f^-) = \emptyset$.

The restricted signomial $f_{|F} = x_2^2 - 2x_2 + 1$ is non-negative but it has a positive real zero. Thus, $f_{|F}$ does not satisfy the condition \eqref{Eq_NB}. Since $f(0.5,1,0.5) = -0.125 <0$ and $f_{|G}$ is non-negative for all proper faces $G \subsetneq \N(f)$, we have $\Sigma(f) = \{ 0 \}$. 

The real tropicalization of $S(f)$ is a  $2$-dimensional cone, spanned by the vectors $(-1,0,0)$ and $(-1,0,-1)$. This example shows that $\Trop(S(f))$ is not always a subfan of the outer normal fan of $\N(f)$, and that all the inclusions in Theorem \ref{Thm_Inclusions} might be strict:
\[\inte\big(   \mathcal{N}^-_{f} \big) \subsetneq \Sigma(f) \subsetneq \Trop\big( S(f) \big) \subsetneq  \mathcal{N}^-_{f}. \]
\end{ex}

\begin{figure}[t]
\centering
\begin{minipage}[h]{0.45\textwidth}
\centering
\includegraphics[scale=0.45]{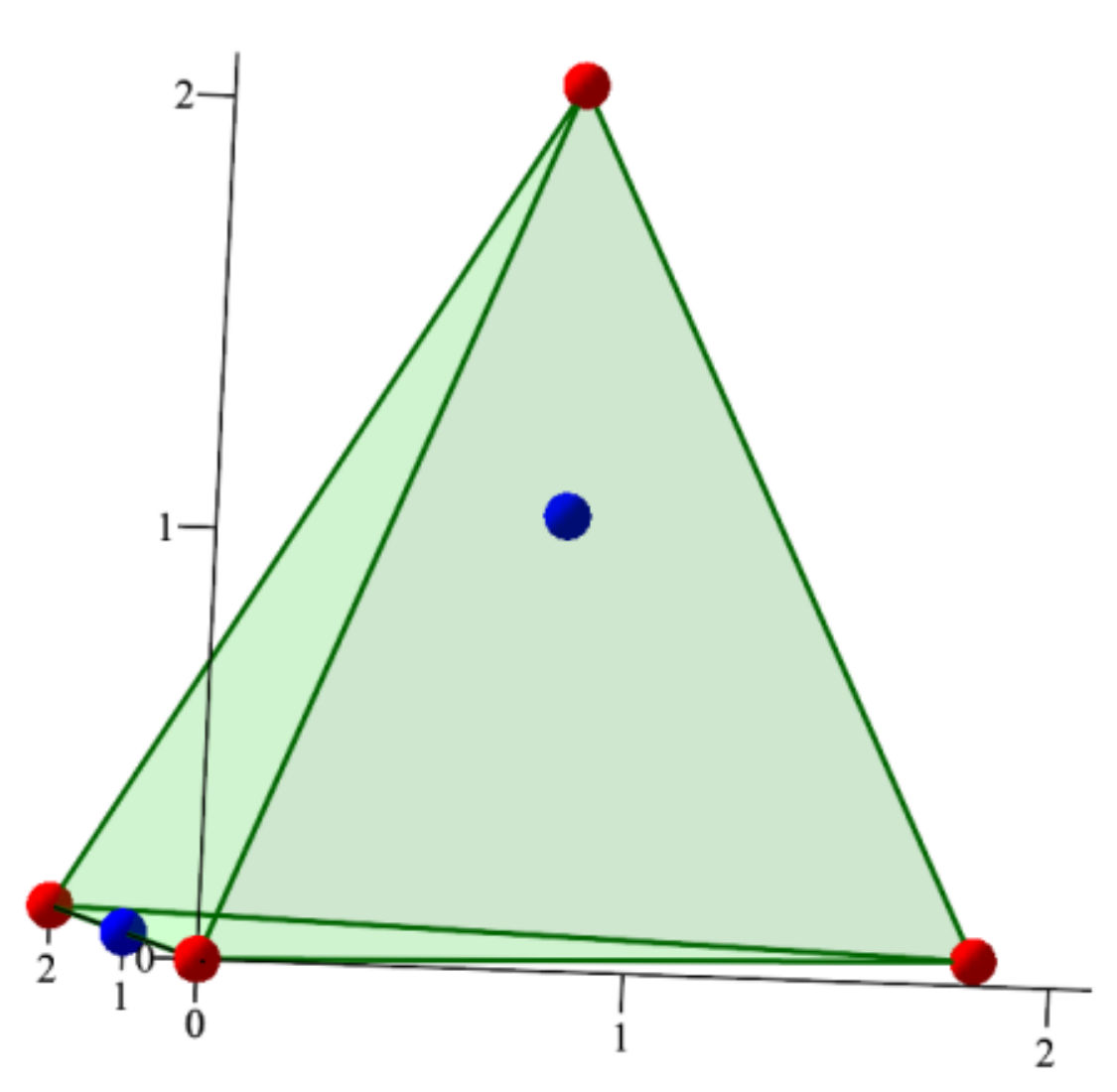}

{\small (a) $\N(f)$ }
\end{minipage}
\begin{minipage}[h]{0.45\textwidth}
\centering
\includegraphics[scale=0.45]{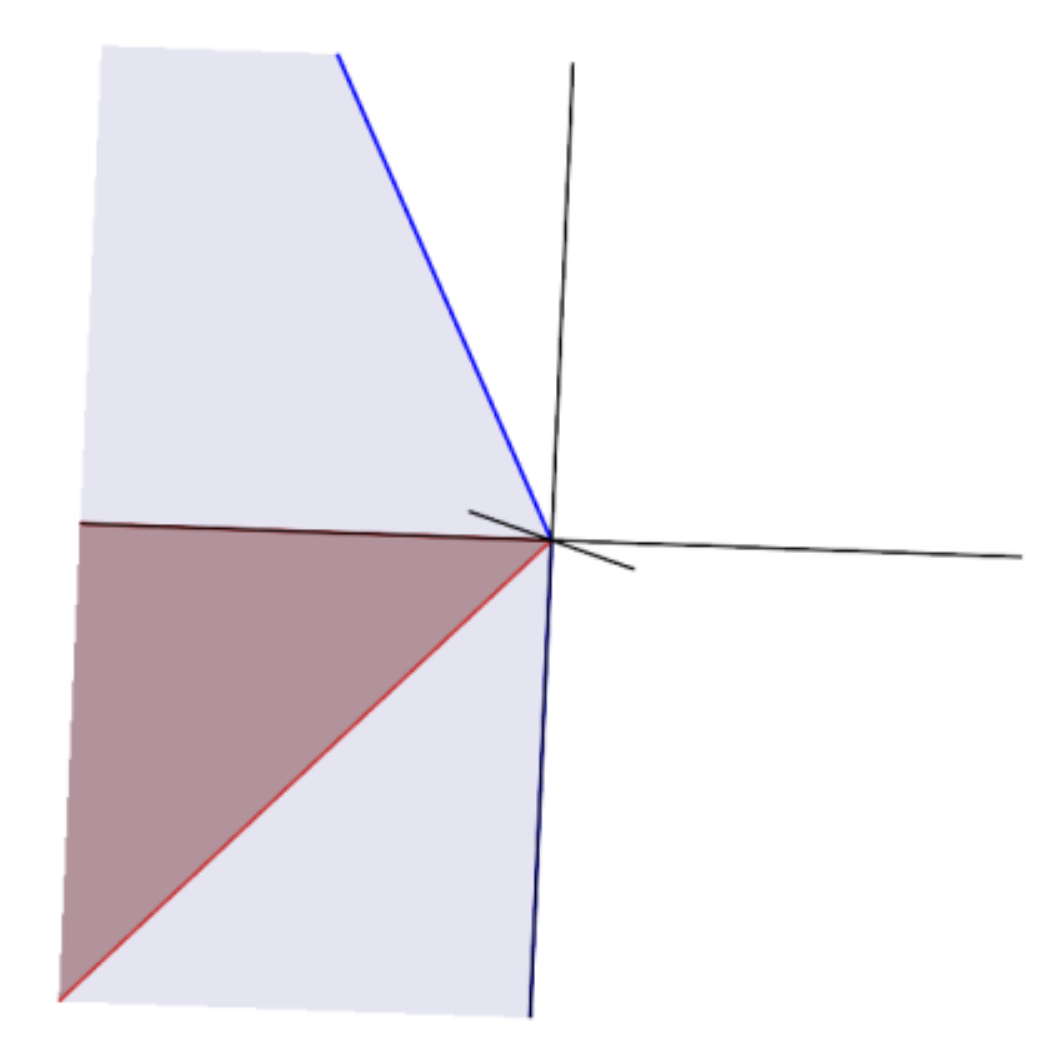}

{\small (b) $\Trop\big( S(f) \big) \subsetneq \mathcal{N}_f^-$}
\end{minipage}

\begin{minipage}[h]{0.45\textwidth}
\centering
\includegraphics[scale=0.35]{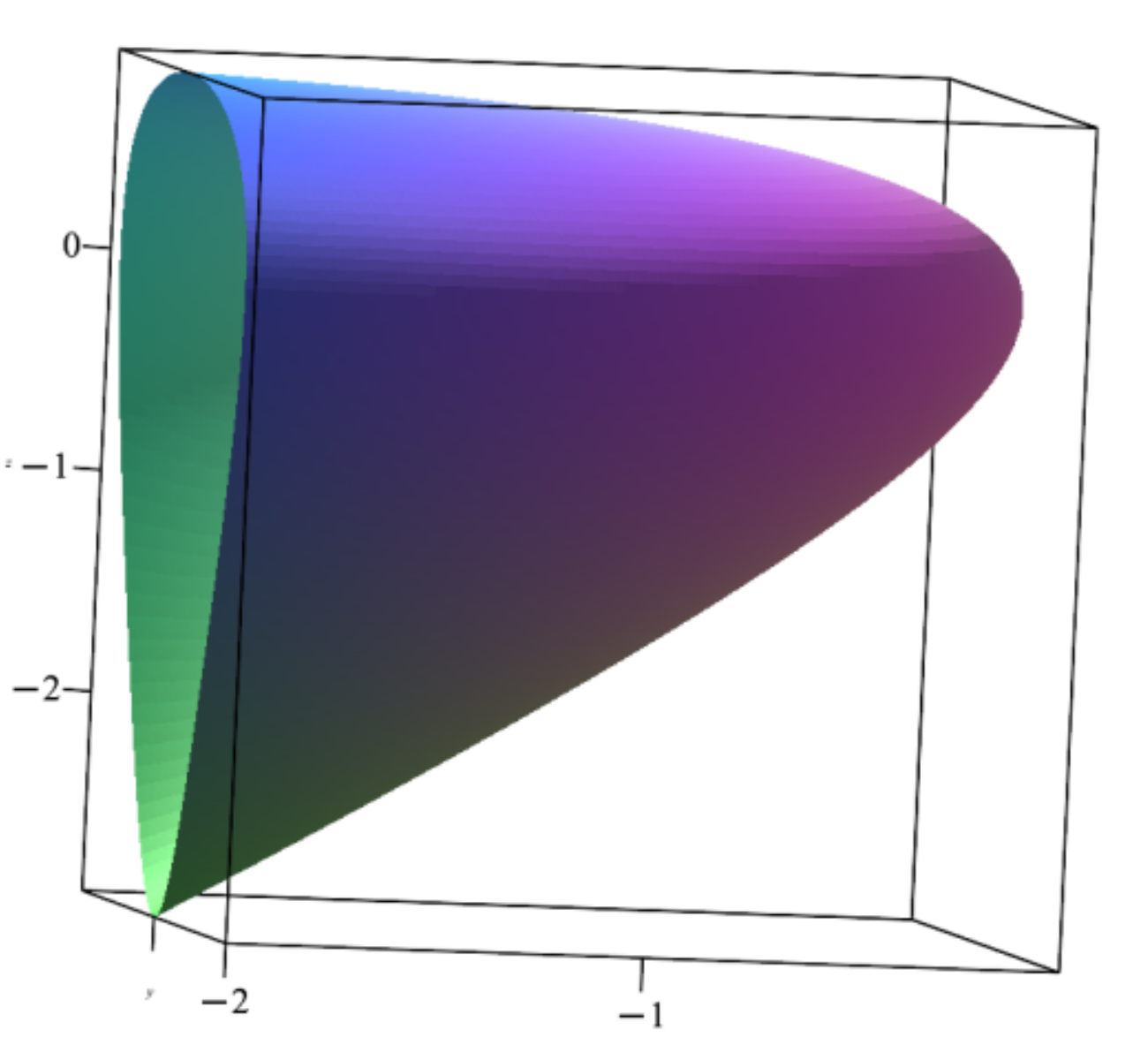}

{\small (c)  $\Log_{2}\big( S(f) \big)$}
\end{minipage}
\begin{minipage}[h]{0.45\textwidth}
\centering
\includegraphics[scale=0.35]{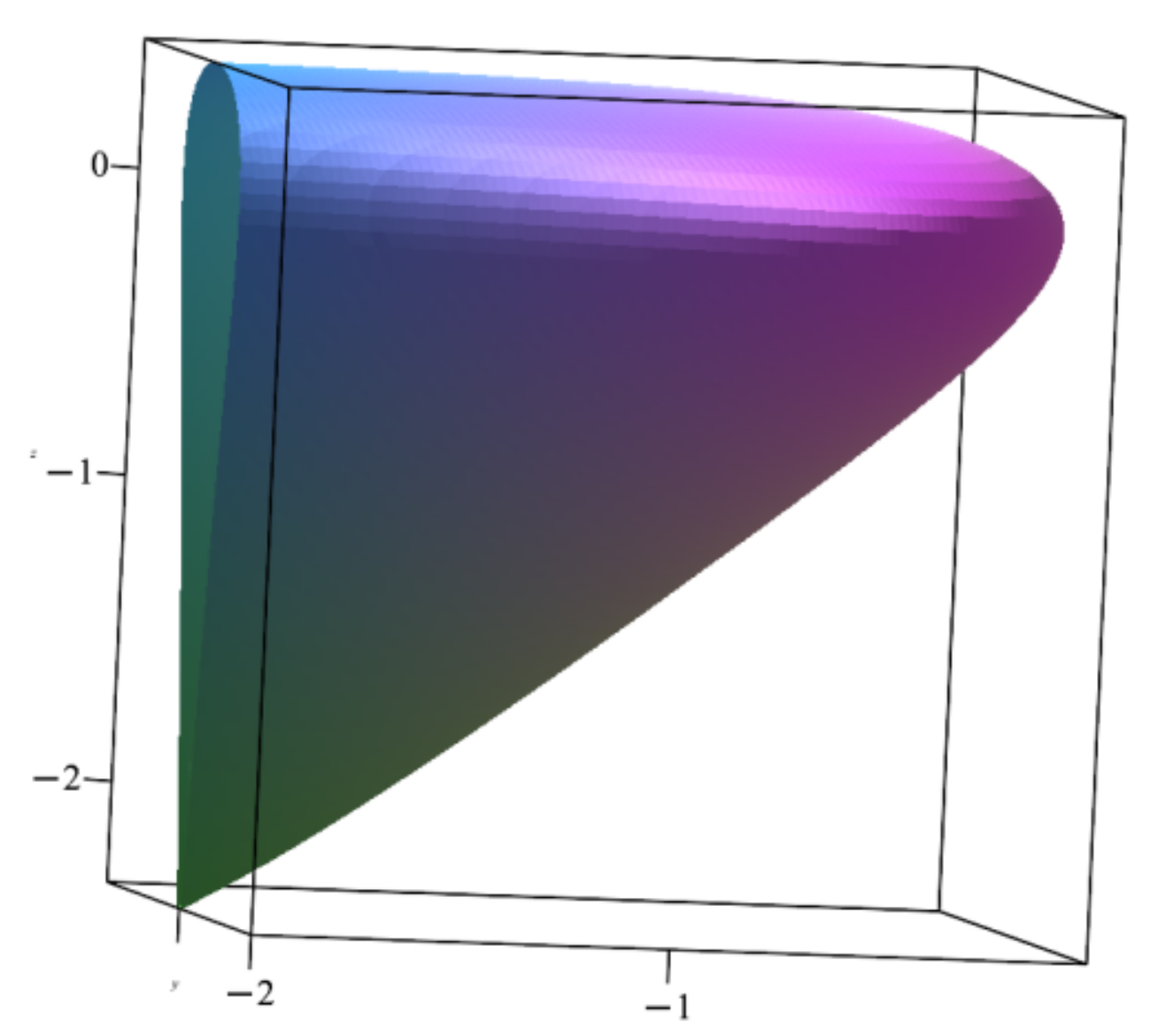}

{\small (d)  $\Log_{5}\big( S(f) \big)$}
\end{minipage}
\caption{{\small Illustration of Example \ref{Ex::3d} (a) The Newton polytope of  $f = x_2^2 - 2x_2 + 1 - 2 x_1x_2x_3 + x_1x_2x_3^2+x_1^2x_2$ . The exponent vector $(1,1,1)$ lies in the interior of $\N(f)$. (b) Negative normal cone of $f$ (blue shaded area) and the real tropicalization of $S(f)$ (pink shaded area) (c),(d) Logarithmic images of $S(f)$ for $t=2$ and $t=5$ respectively.}}\label{FIG4}
\end{figure}

\section*{Acknowledgments}
The author thanks Elisenda Feliu for useful discussions and comments on the manuscript. Funded by the European Union under the Grant Agreement no. 101044561, POSALG. Views and opinions expressed are those of the author(s) only and do not necessarily reflect those of the European Union or European Research Council (ERC). Neither the European Union nor ERC can be held responsible for them

\section*{Declaration of competing interest}
The authors declare that they have no known competing financial interests or personal relationships that could have appeared to influence the work reported in this paper.

{\small
%\bibliographystyle{plain}
%\bibliography{references}

}

\end{document}